\theoremstyle{plain}
\numberwithin{equation}{section} \numberwithin{figure}{section}
\newtheorem{theorem}{Theorem}[section]
\newtheorem{lemma}[theorem]{Lemma}
\newtheorem{proposition}[theorem]{Proposition}
\newtheorem{definition}[theorem]{Definition}
\theoremstyle{definition}
\newtheorem{remark}[theorem]{Remark}
\newtheorem{example}[theorem]{Example}
\numberwithin{equation}{section}
\begin{document}

%

%

\title[Elliptic systems in Orlicz-Sobolev spaces]{Elliptic systems in Orlicz-Sobolev spaces with critical sources in bounded domains}
\keywords{Elliptic Systems, Variational methods, Critical problem, Orlicz Space, Concentration compactness principle}
\subjclass[2020]{35J20,35J47, 35J57,   46E30}

\maketitle

{\small
\begin{center}
{\sc Pablo Ochoa} \\
 Universidad Nacional de Cuyo-CONICET-Universidad Juan A. Maza\\
Parque Gral. San Mart\'in 5500, Mendoza, Argentina\\ pablo.ochoa@ingenieria.uncuyo.edu.ar.
\end{center}
}

\begin{abstract}
In this paper, we show the existence of non-trivial solutions  to very general  elliptic systems with  critical non-linearities  in the sense of embeddings in Orlicz-Sobolev spaces. This allows to consider  non-linearities which do not have polynomial growth. To achieve the existence, we combine a Mountain Pass Theorem without the Palais-Smale condition with the second Concentration Compactness Principle of Lions in Orlicz-Sobolev spaces. 

\end{abstract}

\section{Introduction}
In this work, we consider elliptic systems in the framework of Orlicz-Sobolev spaces with homogeneous boundary conditions:
\begin{equation}\label{main system}
\begin{cases}
-\Delta_{g_1}u=F_s(u, v) + \lambda H_s(x, u, v) \quad \text{in }\Omega\\
-\Delta_{g_2}v=F_t(u, v)+\lambda H_t(x, u, v) \quad \text{in }\Omega\\
(u, v)\neq (0, 0) \quad\text{in }\Omega\\
u=v =0 \quad \text{ on }\partial\Omega.
\end{cases}
\end{equation}Here, $\Omega$ is a bounded domain in $\mathbb{R}^N$ with smooth boundary.  We next describe briefly the data in the system \eqref{main system}. We start with the main operators. The $g_i$-Laplacian operators are given by
$$\Delta_{g_i}u := \text{div}\left(g_i(|\nabla u|)\nabla u \right), \quad i=1, 2,$$where $G'_i(t)= g_i(t)t$, being $G_i$ $N$-functions (see Section \ref{preliminares} for details). Regarding the nonlinearity $F=F(s, t)$, we will assume that it has critical growth in the sense of the embeddings between the Orlicz-Sobolev spaces $W_0^{1, G_i}(\Omega)$ into the Orlicz-Lebesgue spaces $L^{G_i^*}(\Omega)$, where $G_i^*$ is the optimal critical $N$-function in the above embedding. Regarding the nonlinearity $H=H(s, t)$, it is assumed to be subcritical  and its role is that it helps to find nontrivial solutions to \eqref{main system}. 

The prototypes for \eqref{main system} are the following elliptic systems involving $p$-Laplace operators and critical non-linearities:
\begin{equation}\label{example1}
\begin{cases}
-\Delta_{p_1}u=\mu_1|u|^{p_1^*-2}u+ c_1|v|^{\frac{p_2^*(p_1^*-1)}{p_1^*}}+d_1v|u|^{\frac{p_1^*(p_2^*-1)}{p_2^*}-2}u+\lambda H_u(u, v)\quad \text{in }\Omega\\
-\Delta_{p_2}v=\mu_2|v|^{p_2^*-2}v+ c_2|u|^{\frac{p_1^*(p_2^*-1)}{p_2^*}-2}u +d_2u|v|^{\frac{p_2^*(p_1^*-1)}{p_1^*}-2}v +\lambda H_v(u, v)\quad \text{in }\Omega \\
u=v=0 \quad \text{on }\partial \Omega,
\end{cases}
\end{equation}and
\begin{equation}\label{example2}
\begin{cases}
-\Delta_{p}u=\mu_1|u|^{q-2}u+ c_{\alpha, \beta}|u|^{\alpha-2}u|v|^\beta+\lambda H_u(u, v)\quad \text{in }\Omega\\
-\Delta_{p}v=\mu_2|v|^{q-2}v+ d_{\alpha, \beta}|v|^{\beta-2}v|u|^\alpha+\lambda H_v(u, v)\quad \text{in }\Omega \\
u=v=0 \quad \text{on }\partial \Omega,
\end{cases}
\end{equation}where $ p, p_i>1$, $p<q<p^*$,
$$p_i^*= \dfrac{Np_i}{N-p_i},$$for $i=1, 2$, and 
$\alpha, \beta>1, \quad \alpha+\beta= p^*$. The source term $H=H(u, v)$ is assumed to have subcritical growth.

  Systems like \eqref{example1} but with subcritical growth have been discussed in \cite{BF} and references therein, leaving the critical case as in \eqref{example1} as an open problem (see also \cite{Cerda}). Regarding critical growth, systems like \eqref{example2} were considered in \cite{Alves} with $p=2$ and $q=2$, in \cite{CCM} with possibly different $p$-Laplacians and in \cite{GPZ} (see also \cite{D}) with $q=p^{*}$. Resonant quasilinear elliptic systems have been treated recently in \cite{BC} and with non-linearities depending also on the gradient in \cite{Can}.    
  
  Regarding elliptic systems with more general nonlinearities, we quote the reference \cite{FMR}, where an Orlicz-space approach is considered to treat superlinear sources in elliptic systems with the standard Laplacian $\Delta$. However, their proofs seem to depend strongly in the linearity of the main operator. See also \cite{F} where a different approach is considered, showing a link with a higher order laplacian in Orlicz-Sobolev spaces.

The main result in this paper is the following. We refer the reader to Sections \ref{preliminares} and \ref{main assumptios datap} for the details in the assumptions.  
\begin{theorem}\label{main theorem}
Let $\Omega \subset \mathbb{R}^N$ be a bounded domain with the cone property. Consider the elliptic system \eqref{main system}, where the $N$-functions $G_1$ and $G_2$ satisfy \eqref{G1}, \eqref{GG}, \eqref{GGG} and \eqref{exponents}. Regarding the data $F$ and $H$, we assume that they verify the conditions $(F1)-(F3)$ and $(H1)-(H3)$, respectively. Then, there exists $\lambda_0>0$ such that for all $\lambda \geq \lambda_0$, the system \eqref{main system} has a weak solution $(u, v)\neq (0, 0)$  in $\Omega$. 
\end{theorem}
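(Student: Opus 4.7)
The plan is to obtain weak solutions of \eqref{main system} as non-trivial critical points of the $C^1$-functional
\[
J_\lambda(u,v) = \int_\Omega G_1(|\nabla u|)\,dx + \int_\Omega G_2(|\nabla v|)\,dx - \int_\Omega F(u,v)\,dx - \lambda \int_\Omega H(x,u,v)\,dx
\]
on the reflexive product Orlicz--Sobolev space $E := W_0^{1,G_1}(\Omega)\times W_0^{1,G_2}(\Omega)$, by combining a Mountain Pass Theorem without the Palais--Smale condition with the second concentration--compactness principle of P.-L.~Lions in Orlicz--Sobolev form. The hypotheses \eqref{G1}--\eqref{exponents} ensure that $E$ is reflexive under the Luxemburg norm and that the critical embeddings $W_0^{1,G_i}(\Omega)\hookrightarrow L^{G_i^*}(\Omega)$ hold, while $(F1)$--$(F3)$ and $(H1)$--$(H3)$ guarantee that $J_\lambda$ is well defined, of class $C^1$, and that critical growth appears only through $F$.

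First I would verify the mountain-pass geometry of $J_\lambda$: from $(F1)$, $(H1)$ and the Sobolev--Orlicz embedding one finds $\rho,\alpha>0$ with $J_\lambda(u,v)\ge\alpha$ on the sphere $\|(u,v)\|_E=\rho$, while the superlinear Ambrosetti--Rabinowitz condition encoded in $(F2)$ yields $(u_0,v_0)\in E$ with $\|(u_0,v_0)\|_E>\rho$ and $J_\lambda(u_0,v_0)<0$. Defining the minimax level
\[
c_\lambda = \inf_{\gamma\in\Gamma}\max_{t\in[0,1]} J_\lambda(\gamma(t)),\qquad \Gamma=\{\gamma\in C([0,1],E):\gamma(0)=(0,0),\,\gamma(1)=(u_0,v_0)\},
\]
the deformation-free version of the Mountain Pass Theorem supplies a Palais--Smale sequence $(u_n,v_n)$ at level $c_\lambda$. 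The AR-type hypotheses together with the $\Delta_2$ structure of the $G_i$ then give boundedness of $(u_n,v_n)$ in $E$, so that along a subsequence $(u_n,v_n)\rightharpoonup(u,v)$ in $E$, with convergence a.e. and in the subcritical Orlicz--Lebesgue spaces.

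The main obstacle, and the heart of the proof, is passing to the limit in the critical nonlinearity $F(u_n,v_n)$. Here I would apply the second concentration--compactness principle in Orlicz--Sobolev form to $(u_n)$ and $(v_n)$, producing an at-most countable family of points $\{x_j\}_{j\in J}\subset\overline{\Omega}$ and non-negative weights with
\[
G_i^*(|u_n|)\,dx \rightharpoonup G_i^*(|u|)\,dx+\sum_{j\in J}\nu_{i,j}\,\delta_{x_j},\qquad G_i(|\nabla u_n|)\,dx \rightharpoonup \mu_i\ge G_i(|\nabla u|)\,dx+\sum_{j\in J}\mu_{i,j}\,\delta_{x_j},
\]
together with a reverse-Sobolev inequality relating $\mu_{i,j}$ and $\nu_{i,j}$ through the best embedding constants $S_i$. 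Testing $J_\lambda'(u_n,v_n)$ against cut-offs of the form $\varphi\cdot(u_n,v_n)$ supported near each concentration point and invoking the Euler-homogeneity of $F$ from $(F3)$, I would extract a universal threshold $c^*>0$, depending only on $S_1,S_2$ and on $G_1^*,G_2^*$, such that the existence of any atom forces $c_\lambda\ge c^*$.

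The proof is closed by selecting $\lambda_0$ so large that $c_\lambda<c^*$ for every $\lambda\ge\lambda_0$; this is where the subcritical source $H$ plays its decisive role, since evaluating $J_\lambda$ along a fixed admissible path and using $(H1)$--$(H3)$, the negative contribution $-\lambda\int H$ drives $\max_t J_\lambda(\gamma(t))\to 0$ as $\lambda\to\infty$. With $c_\lambda<c^*$ the index set $J$ must be empty, hence $G_i^*(|u_n|)\to G_i^*(|u|)$ in $L^1(\Omega)$, and strong convergence $u_n\to u$ in $L^{G_1^*}(\Omega)$ and $v_n\to v$ in $L^{G_2^*}(\Omega)$ follows. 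One may then pass to the limit in $J_\lambda'(u_n,v_n)=o(1)$ to conclude that $(u,v)$ is a weak solution of \eqref{main system}, while $J_\lambda(u,v)=c_\lambda\ge\alpha>0=J_\lambda(0,0)$ rules out the trivial solution, completing the proof.
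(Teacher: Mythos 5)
Your overall scaffolding (mountain-pass without (PS), bounded PS sequence, concentration--compactness) matches the paper, but in the decisive steps you follow the classical Brezis--Nirenberg route: extract a $\lambda$-independent threshold $c^*>0$ such that any atom forces $c_\lambda\ge c^*$, choose $\lambda_0$ so that $c_\lambda<c^*$, conclude that the concentration set is empty, get \emph{global} strong convergence in $L^{G_i^*}(\Omega)$, and read off nontriviality from $J_\lambda(u,v)=c_\lambda>0$. The paper does something genuinely different: it never excludes atoms. It only proves that the concentration set is \emph{finite} (testing with cut-offs gives $p_i^-\mu_j\le C\nu_j\le C\mu_j^{\alpha}$ with $\alpha>1$, hence the uniform lower bound \eqref{bound mu}), then derives strong convergence only on compact sets away from the atoms (Lemma \ref{strong convergence}), a.e. convergence of gradients via monotonicity and Dal Maso--Murat, and passes to the limit in $\Phi_\lambda'(u_n,v_n)\to0$ using weak convergence of $g_i(|\nabla u_n|)\nabla u_n$, $F_s$, $F_t$, $H_s$, $H_t$ in the dual Orlicz spaces; nontriviality is then obtained by contradiction: if $(u,v)=(0,0)$, the subcritical terms vanish, the common limit $k$ of $\int_\Omega g_i(|\nabla\cdot|)|\nabla\cdot|^2$ and $\int_\Omega F_su_n+F_tv_n$ has a lower bound $K>0$ independent of $\lambda$ (via Lemma \ref{comp norm modular}, Lemma \ref{numerical lemma} and the Sobolev embedding), which contradicts $c_\lambda\to0^+$ as $\lambda\to\infty$. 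Your route, if completed, buys a cleaner compactness statement (full strong convergence of the PS sequence below the threshold); the paper's route avoids having to prove the energy identity $J_\lambda(u,v)=c_\lambda$ and works even in the presence of (finitely many) atoms.

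Two points in your sketch need to be supplied before it closes. First, the threshold: the claim ``any atom forces $c_\lambda\ge c^*$'' does follow, but from (F1), (H1), $H\ge0$ and the CCP estimate (the $\lambda$-independence coming from the uniform atom bound $\mu_j\ge c$ and from dropping the nonnegative $H$-term in $\Phi_\lambda-\mu\langle\Phi_\lambda',\cdot\rangle$), not from ``Euler homogeneity of $F$ from (F3)''; (F3) is not a homogeneity assumption and is used instead for the unbounded mountain-pass direction and for showing $c_\lambda\to0$ (likewise, the unbounded direction comes from (F3), not (F2) as you state). Second, your final step $J_\lambda(u,v)=c_\lambda\ge\alpha>0$ presupposes an energy identity, which requires modular (not merely weak) convergence of the gradients, i.e.\ $\int_\Omega G_i(|\nabla u_n|)\to\int_\Omega G_i(|\nabla u|)$, typically via an $(S_+)$/monotonicity argument, together with convergence of the $F$- and $H$-integrals; alternatively you can avoid it by arguing as the paper does, or by noting that if $(u,v)=(0,0)$ and there is no concentration then $\langle\Phi_\lambda'(u_n,v_n),(u_n,v_n)\rangle\to0$ forces $\int_\Omega G_i(|\nabla u_n|)\to0$, hence $\Phi_\lambda(u_n,v_n)\to0$, contradicting $c_\lambda>0$. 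With these two points made precise your argument is a valid alternative proof of the theorem.
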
 
In particular, the above theorem applies to systems of the form
\begin{equation}\label{main system2}
\begin{cases}
-\Delta_{g_1}u=F_s(v) + \lambda H_s(v) \quad \text{in }\Omega\\
-\Delta_{g_2}v=F_t(u)+\lambda H_t(u) \quad \text{in }\Omega\\
u=v =0 \quad \text{ on }\partial\Omega.
\end{cases}
\end{equation}Observe that in this case, Theorem \ref{main theorem} gives that $u\neq 0$ and $v\neq 0$ in $\Omega$.  

In Theorem \ref{main theorem} it is assumed that the subcritical term $H$ is non-negative. This condition may be relaxed if we assume a critical growth condition from below in $F$. Thus, we have the next theorem:

\begin{theorem}\label{main theorem 2}
Let $\Omega \subset \mathbb{R}^N$ be a bounded domain with the cone property. Consider the elliptic system \eqref{main system}, where the $N$-functions $G_1$ and $G_2$ satisfy \eqref{G1}, \eqref{GG}, \eqref{GGG} and \eqref{exponents}. Regarding the data $F$ and $H$, we assume that they verify the conditions $(F1)-(F2)$ and $(F3)'$  and $(H1)-(H3)$ (but $H$ not necessarily non-negative), respectively. Then, there exists $\lambda_0>0$ such that for all $\lambda \geq \lambda_0$, the system \eqref{main system} has a weak solution $(u, v) \neq (0, 0)$ in $\Omega$. 
\end{theorem}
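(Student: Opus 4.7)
The plan is to follow the same variational scheme developed for Theorem \ref{main theorem}, tracking where the non-negativity of $H$ was invoked and replacing that ingredient by the critical lower bound on $F$ provided by $(F3)'$. To \eqref{main system} one associates the $C^1$ energy functional
$$J_\lambda(u,v) := \int_\Omega G_1(|\nabla u|)\,dx + \int_\Omega G_2(|\nabla v|)\,dx - \int_\Omega F(u,v)\,dx - \lambda \int_\Omega H(x,u,v)\,dx$$
on the product Orlicz-Sobolev space $X := W_0^{1,G_1}(\Omega)\times W_0^{1,G_2}(\Omega)$, whose critical points coincide with weak solutions. Conditions $(F1)$, $(H1)$--$(H2)$, and the structural assumptions \eqref{G1}--\eqref{exponents} on the $N$-functions give the Mountain Pass geometry: $J_\lambda(0,0)=0$, a positive uniform lower bound on a small sphere in $X$, and a distant point of negative energy via the superlinearity built into $(F2)$. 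The Mountain Pass Theorem without the Palais--Smale condition then produces a sequence $\{(u_n,v_n)\}\subset X$ with $J_\lambda(u_n,v_n)\to c_\lambda$ and $J'_\lambda(u_n,v_n)\to 0$.

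Next, $(F2)$ combined with the subcritical growth of $H$ yields boundedness of $\{(u_n,v_n)\}$ in $X$. Passing to a subsequence, $(u_n,v_n)\rightharpoonup (u,v)$ in $X$, with pointwise a.e.\ convergence in $\Omega$, and $(u,v)$ is a weak solution of \eqref{main system} by standard arguments. The remaining task is to exclude $(u,v)=(0,0)$. This is carried out through the second Concentration-Compactness Principle of Lions adapted to Orlicz-Sobolev spaces, applied to the measures $g_i(|\nabla u_n|)|\nabla u_n|\,dx$ and $G_i^*(|u_n|)\,dx$ (and the analogues for $v_n$). One obtains an at most countable family of concentration points $\{x_j\}\subset\overline\Omega$ carrying Dirac masses $\mu_j^{(i)},\nu_j^{(i)}$ satisfying a reverse Orlicz-Sobolev inequality; a dichotomy argument then forces each $\nu_j^{(i)}$ either to vanish or to be bounded below by a universal constant $\delta>0$ depending only on the optimal embedding constants $S_i$ and the exponents in \eqref{exponents}.

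The decisive departure from Theorem \ref{main theorem} comes in ruling out concentration despite not assuming $H\geq 0$. In Theorem \ref{main theorem}, the estimate $c_\lambda<c^\ast$ for $\lambda\geq\lambda_0$ was obtained using $\lambda\int_\Omega H\geq 0$ along a suitable scaled test path. Here, the pointwise critical lower bound on $F$ provided by $(F3)'$ plays the same role: evaluating $J_\lambda$ along a scaling $(t\varphi,t\psi)$ of a carefully chosen non-negative, compactly supported pair, the critical lower bound on $F$ yields a leading negative contribution of the correct order, while the subcritical (possibly negative) contribution of $\lambda H$ is controlled using $(H1)$--$(H2)$; choosing $\lambda_0$ sufficiently large forces $c_\lambda<c^\ast$. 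Once this is established, all concentration is excluded, strong convergence in the critical Orlicz norm holds, and passing to the limit in $\langle J'_\lambda(u_n,v_n),(u_n,v_n)\rangle=o(1)$ gives $(u,v)\neq (0,0)$. The main obstacle I expect is precisely this sharp estimate $c_\lambda<c^\ast$ under $(F3)'$: the Orlicz framework lacks the exact homogeneity of the $p$-Laplace case, so the Brezis--Nirenberg-style test-function computations must be adapted carefully using the Boyd-type indices from \eqref{exponents} and the interplay between the $N$-functions $G_i$ and their critical conjugates $G_i^*$.
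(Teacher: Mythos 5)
Your proposal misplaces the role of $(F3)'$, and this leaves the decisive step unproved. With $H$ allowed to change sign, the boundedness of the Palais--Smale sequence is exactly where the extra hypothesis is needed, yet you assert it follows from ``$(F2)$ combined with the subcritical growth of $H$'' alone. In the standard estimate one considers $\Phi_\lambda(u_n,v_n)-\mu\langle \Phi_\lambda'(u_n,v_n),(u_n,v_n)\rangle$; by $(H2)$ the $H$-contribution is bounded below by $\lambda(\mu c_1-1)\int_\Omega H(x,u_n,v_n)\,dx$, which is nonnegative only when $H\geq 0$. If $H$ is sign-changing, $(H1)$ and \eqref{growth H} control this term only by quantities of the order $\|u_n\|_{B_1}^{p_{B_1}^{\pm}}+\|v_n\|_{B_2}^{p_{B_2}^{\pm}}$, and by \eqref{exponents B} one has $p_{B_i}^{-}>\max\{p_1^+,p_2^+\}$, so these powers are strictly larger than the powers $p_i^{\pm}$ furnished by the gradient modulars $(1-p_i^-\mu)\rho_{G_i}(\nabla\cdot)$ in \eqref{lower bound}; they cannot be absorbed there, and boundedness does not follow from your cited ingredients. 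This is precisely where the paper uses $(F3)'$: the pointwise bound $F(s,t)\geq C\left(G_1^*(s)+G_2^*(t)\right)$ makes the term $(\mu c_0-1)\int_\Omega F$ grow with the critical exponents $(p_i^-)^*$, which by \eqref{exponents B} dominate $p_{B_i}^{\pm}$, so the combined $F$- and $H$-terms are bounded below (indeed eventually nonnegative along the sequence, as in \eqref{long calc}), and the proof of Lemma \ref{boundedness} then goes through unchanged.

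The place where you do invoke $(F3)'$ --- a Brezis--Nirenberg type threshold $c_\lambda<c^\ast$ that would exclude all concentration --- is both different from the paper's argument and, as you acknowledge, not carried out: you list it as ``the main obstacle,'' so the nontriviality and compactness part of your proof rests on an unproved estimate. The paper needs no such sharp level: it proves finiteness of the concentration set and local strong convergence away from it, passes to the limit to get a weak solution, and then rules out $(u,v)=(0,0)$ by a two-sided estimate on $c_\lambda$, namely $c_\lambda\to 0^+$ as $\lambda\to\infty$ (using $(H3)$ along the path $t\mapsto(tu_0,tv_0)$) versus a $\lambda$-independent positive lower bound for $c_\lambda$ derived from the normalization $k>0$ when the weak limit vanishes; for $\lambda\geq\lambda_0$ these are contradictory. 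That route only requires, when $H$ is sign-changing, that the possibly negative subcritical contributions of $H$ be dominated by the critical lower bound from $(F3)'$ (or simply estimated via \eqref{growth H}), not a mountain-pass level below a compactness threshold. As written, your proposal therefore has two genuine gaps: the boundedness of the PS sequence, and the unproven estimate $c_\lambda<c^\ast$ on which everything after it depends.
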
 

We point out that the above results fits into the findings of \cite{FIN} for single equations.

We conclude with the following remark. In the case of Lane-Emden  systems of the form
\begin{equation}\label{system}
\begin{cases}-\Delta u = \phi(v) \quad \text{in }\Omega\\
-\Delta v = \psi(u)\quad \text{in }\Omega\\
u=v=0\quad \text{on }\partial\Omega
\end{cases}
\end{equation}we may obtain regularity of the solutions by analysing the regularity of solutions to higher-order operators in Orlicz-Sobolev spaces. Indeed, let us suppose that $\psi$ is odd and one to one. Let $h=\psi^{-1}$, which is also odd. Then, $-\Delta v = \psi(u)$ implies
$$
u=-h(\Delta v).
$$
Plugging into the first equation of the system \eqref{system}, we deduce
$$
\phi(v)= -\Delta u = \Delta(h(\Delta v)) = \Delta(g(\Delta v)\Delta v),
$$
where
$$
g(t)=\dfrac{h(t)}{t}.
$$ 
The operator
$$\Delta_g^2 u= \Delta \left(g(\Delta u)\Delta u\right),$$is a generalization of the biharmonic Laplacian $\Delta^2$ and it was considered in \cite{OS} and in \cite{JOS}, where regularity results of Calder\'on-Zygmund type were deduced.

The paper is organized as follows. In Section \ref{preliminares}, we give the basic results and definitions concerning $N$-functions and related spaces. In Section \ref{main assumptios datap}, we provide the main assumptions on the data and we discuss some of their consequences and examples. Also, in Section \ref{preliminary results}, we state the main auxiliary results concerning the mountain pass theorem without the Palais-Smale condition and the concentration compactness principle in the framework of Orlicz-Sobolev spaces. Finally, in  Section \ref{proof of main theorem}, we explain the proof of Theorem \ref{main theorem}, which is given in several steps, and in the last Section \ref{proof second} we discuss the proof of Theorem \ref{main theorem 2}.
  
  \section{Preliminaries}\label{preliminares}

\subsection{N-functions and basic properties}
In this section, we introduce basic definitions and preliminary results related to Orlicz spaces. We start recalling the definition of an N-function.
	\begin{definition}\label{d2.1}
		A function $G \colon [0, \infty) \rightarrow \mathbb{R}$ is called an N-function if it admits the representation
		$$G(t)= \int _{0} ^{t} g(\tau)\tau d\tau,$$
		where the function $h(t)=g(t)t$ is right-continuous for $t \geq 0$,  positive for $t >0$, non-decreasing and satisfies the conditions
		$$h(0)=0, \quad h(\infty)=\lim_{t \to \infty}h(t)=\infty.$$
		\end{definition}We extend the function $G$ evenly on the whole of $\mathbb{R}$.
		
		By \cite[Chapter 1]{KR}, an N-function has also the following properties:
		\begin{enumerate}
			\item $G$ is continuous, convex, increasing for $t\geq 0$, even and $G(0) = 0$.
	\item $G$ is super-linear at zero and at infinite, that is $$\lim_{x\rightarrow 0} \dfrac{G(x)}{x}=0$$and
	$$\lim_{x\rightarrow \infty} \dfrac{G(x)}{x}=\infty.$$
	
		\end{enumerate}Indeed, the above conditions serve as an equivalent definition of N-functions.
		
		An important property for N-functions is the following:
		\begin{definition}
		
			 We say that the N-function  $G$ satisfies the $\bigtriangleup_{2}$ condition if  there exists $C > 2$ such that
			\begin{equation*}
				G(2x) \leq C G(x) \,\,\text{~~~for all~~} x \in \mathbb{R}_+.
			\end{equation*}
			\end{definition}
			
			 Examples of functions satisfying the $\bigtriangleup_{2}$ condition are:
    \begin{itemize}
        \item $G(t)= t^{q}$, $t \geq 0$, $q > 1$;
        \item $G(t)= t^{q}(|\log(t)|+1)$, $t \geq 0$, $q > 1$;
        \item $G(t)=(1+|t|)\log(1+|t|) - |t|$;
        \item $G(t)=t^{q_1}\chi_{(0, 1]}(t) + t^{q_2}\chi_{(1, \infty)}(t)$, $t \geq 0$, $q_1, q_2 > 1$.
    \end{itemize}

    
    According to \cite[Theorem 4.1, Chapter 1]{KR}, a necessary and sufficient condition for an N-function to satisfy the $\bigtriangleup_{2}$ condition is that there is $p^{+} > 1$ such that
			\begin{equation}\label{eq p mas}
		\frac{t^2g(t)}{G(t)} \leq p^{+}, ~~~~~\forall\, t>0.
	\end{equation}In particular, \eqref{eq p mas} implies that an N-function satisfying the $\bigtriangleup_{2}$ condition does not increase more rapidly than a power function (see \eqref{G product} below).
	
	Associated to $G$ is  the N-function  complementary to it which is defined as follows:
		\begin{equation}\label{Gcomp}
			\widetilde{G} (t) := \sup \left\lbrace tw-G(w) \colon w>0 \right\rbrace .
		\end{equation}We recall that the role played by $\widetilde{G}$ is the same as the conjugate exponent functions when $G(t)=t^{p}$, $p >1$.
	
The definition of the complementary function assures  the following Young-type inequality 
	\begin{equation}\label{2.5}
		at \leq G(t)+\widetilde{G} (a) \text{  for every } a,t \geq 0.
	\end{equation}
	
	We also quote the following useful lemma.
    \begin{lemma}\label{G g}
        Let $G$ be an N-function. If $G$ satisfies \eqref{eq p mas} then 
        \[
            \tilde{G}(g(t)t) \leq (p^+-1)G(t),
        \]
        where $g(t)t=G'(t)$ and $\tilde{G}$ is the complementary function of $G.$
\end{lemma}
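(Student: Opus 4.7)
My plan is to exploit the classical Young--Fenchel identity, which in this setting reads
\[
\tilde{G}\bigl(G'(t)\bigr) = t\,G'(t) - G(t).
\]
First I would establish this identity directly from the definition \eqref{Gcomp}. Since $G$ is convex and differentiable with $G'(t)=g(t)t$ non-decreasing, for fixed $t>0$ the map $w\mapsto G'(t)\,w - G(w)$ is concave in $w$ and its derivative $G'(t)-G'(w)$ vanishes precisely at $w=t$. Hence the supremum defining $\tilde{G}(G'(t))$ is attained at $w=t$, giving
\[
\tilde{G}\bigl(g(t)t\bigr) = G'(t)\cdot t - G(t) = t^{2}g(t) - G(t).
\]

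With this identity in hand, the conclusion is immediate from the hypothesis \eqref{eq p mas}. Indeed, $t^{2}g(t)\le p^{+}G(t)$ yields
\[
\tilde{G}\bigl(g(t)t\bigr) = t^{2}g(t) - G(t) \le p^{+}G(t) - G(t) = (p^{+}-1)\,G(t),
\]
which is the desired inequality.

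There is no real obstacle here, since the proof boils down to recognizing that the supremum in the definition of $\tilde{G}$ is attained at the critical point dictated by convex duality. The only mild subtlety is to justify that the supremum is attained (rather than only approached), which follows from the super-linearity of $G$ at infinity guaranteeing that $G'(t)w - G(w)\to -\infty$ as $w\to\infty$, combined with continuity on $[0,\infty)$.
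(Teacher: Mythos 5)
Your proposal is correct and is exactly the standard argument behind this lemma (which the paper only quotes, without proof): the Young--Fenchel equality $\tilde{G}(g(t)t)=t^{2}g(t)-G(t)$ followed by the bound $t^{2}g(t)\le p^{+}G(t)$ from \eqref{eq p mas}. The only point worth polishing is that $G$ need not be differentiable everywhere (the density $h(t)=g(t)t$ is merely right-continuous and non-decreasing), but attainment of the supremum at $w=t$ still follows from monotonicity of $h$, since $h(t)-h(w)\ge 0$ for $w\le t$ and $\le 0$ for $w\ge t$, so your argument goes through essentially unchanged.
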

	
	By \cite[Theorem 4.3,   Chapter 1]{KR}, a necessary and sufficient condition for the N-function $\widetilde{G} $ complementary to $G$ to satisfy the $\bigtriangleup_{2}$ condition is that there is $p^{-} > 1$ such that
			\begin{equation}
	p^{-} \leq 	\frac{t^2g(t)}{G(t)}, ~~~~~\forall\, t>0.
	\end{equation}

	From now on, we will assume that the N-function  $G(t)= \int _{0} ^{t} g(\tau)\tau d\tau$  satisfies the following growth behaviour:
	\begin{equation}\label{G1}
		1 < p^{-} \leq \frac{t^2g(t)}{G(t)} \leq p^{+} < \infty, ~~~~~\forall t>0.
	\end{equation}Hence, $G$ and $\widetilde{G} $ both satisfy the $\bigtriangleup_{2}$ condition.

	From \eqref{G1} there follows the next comparison of N-functions with power functions:
    \begin{equation}\label{G product}
        \min\left\lbrace a^{p^{-}}, a^{p^{+}} \right\rbrace G(b) \leq G(ab) \leq  \max\left\lbrace a^{p^{-}}, a^{p^{+}} \right\rbrace G(b),
    \end{equation}for all $a, b$, where we use the convention $t^{\alpha}= |t|^{\alpha-1}t$. 
    
    \medskip 

Given two $N$-functions $A$ and $B$, we say that $B$ is essentially larger than $A$, denoted by
$A \ll B,$ if for any $c > 0$,
		\begin{equation*}
			\lim_{t \rightarrow \infty} \dfrac{A(ct)}{B(t)}=0.
		\end{equation*}

%
%
%
%

\subsection*{Orlicz-Sobolev spaces} Given an N-function $G$, with $G'(t)=g(t)t$, we define the Orlicz-Lebesgue class $L^G(\Omega)$ as follows
$$
L^G(\Omega):=\left\lbrace u: \Omega \to \mathbb{R}, \int_\Omega G(u)\,dx < \infty\right\rbrace.
$$
If $G$ satisfies the $\Delta_2$ condition, then $L^G$ becomes a vector spaces, and indeed it is a Banach space with respect to the Luxemburg norm
$$
\|u\|_G:=\inf \left\lbrace \lambda > 0: \int_\Omega G\left(\dfrac{u}{\lambda}\right)\,dx \leq 1\right\rbrace.
$$
We will denote the convex modular associated to the norm by
$$
\rho_{G}(u):=\int_\Omega G(u)\,dx.
$$

We will also consider the Orlicz-Sobolev spaces
$$
W^{1, G}(\Omega):=\left\lbrace u \in L^G(\Omega), \, |\nabla u | \in L^G(\Omega)\right\rbrace.
$$
The space $W^{1, G}(\Omega)$ equipped with the norm
$$
\|u\|_{1,G}:= \|u\|_G+\|\nabla u\|_G
$$
is a Banach space.  We denote by $W_0^{1, G}(\Omega)$ the closure of $C_0^{\infty}(\Omega)$ with respect to the norm $\|u\|_{1, G}$. By Theorem \ref{compact embedding} below, an equivalent norm in $W_0^{1, G}(\Omega)$ is
$$\|u\|:=\|\nabla u\|_G.$$We will use this norm throughout the paper. 

In the case where also $\widetilde G$ satisfies the $\Delta_2$ condition, these spaces are reflexive and separable.

It will be useful to have the following relation between modulars and norms.
\begin{lemma}\label{comp norm modular}
        Let $G$ be an N-function satisfying \eqref{G1},  and let 
        $\xi^\pm\colon[0,\infty)$ $\to\mathbb{R}$ be defined as
        \[
            \xi^{-}(t):= 
            \min \big \{  t^{p^{-}}, t^{p^{+}} \big  \} ,
            \quad \text{ and }  \quad
            \xi^{+}(t):=\max \big \{  t^{p^{-}}, t^{p^{+}} \big \} . 
         \] 
         Then
   
             $$\xi^{-}(\|u\|_{G}) \leq \rho_{G}(|u|) 
                    \leq  \xi^{+}(\|u\|_{G}).$$
    \end{lemma}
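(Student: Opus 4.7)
The plan is to reduce the statement to the pointwise scaling inequality \eqref{G product}, applied with the Luxemburg norm as the scaling factor. The only ingredient beyond that is the standard fact that, under the $\Delta_2$ assumption \eqref{G1}, the infimum defining the Luxemburg norm is attained, so that $\rho_G(|u|/\|u\|_G) = 1$ for every $u \neq 0$.

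I would first dispose of the trivial case $u \equiv 0$, where both sides of the asserted inequality vanish. For $u \neq 0$, set $\lambda := \|u\|_G > 0$. The first substantive step is to verify that $\rho_G(|u|/\lambda) = 1$. Since $G$ is increasing in $|t|$, the map $t \mapsto \rho_G(|u|/t)$ is monotone on $(0, \infty)$; using \eqref{G1}, which provides polynomial majorants as in \eqref{G product}, a dominated convergence argument shows that this map is also continuous, tends to $0$ as $t \to \infty$ and to $+\infty$ as $t \to 0^+$. Combined with the definition of the Luxemburg norm, this forces $\rho_G(|u|/\lambda) = 1$. Next I would apply \eqref{G product} pointwise with $a = \lambda$ and $b = |u(x)|/\lambda$:
$$\min\{\lambda^{p^-}, \lambda^{p^+}\}\, G\!\left(\tfrac{|u(x)|}{\lambda}\right) \;\leq\; G(|u(x)|) \;\leq\; \max\{\lambda^{p^-}, \lambda^{p^+}\}\, G\!\left(\tfrac{|u(x)|}{\lambda}\right),$$
integrate over $\Omega$, and substitute $\rho_G(|u|/\lambda) = 1$ to conclude $\xi^-(\lambda) \leq \rho_G(|u|) \leq \xi^+(\lambda)$, which is the desired inequality.

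The only real technical point is the continuity/attainment step for the Luxemburg infimum; without $\Delta_2$ this can fail, but it is guaranteed here by \eqref{G1}. Everything else is a pointwise inequality followed by a single integration, so no further subtleties arise.
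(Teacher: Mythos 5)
Your argument is correct. The paper states Lemma \ref{comp norm modular} without giving a proof (it is the standard norm--modular comparison under the $\Delta_2$ condition, as in \cite{FIN}), and your route is precisely that standard argument: establish the attainment $\rho_G\big(|u|/\|u\|_G\big)=1$ for $u\neq 0$, which does require the $\Delta_2$ condition encoded in \eqref{G1} (monotone convergence gives $\rho_G(|u|/\lambda)\leq 1$ for free, while ruling out $\rho_G(|u|/\lambda)<1$ uses the $\max\{a^{p^-},a^{p^+}\}$ bound in \eqref{G product} to scale below $\lambda$), and then apply \eqref{G product} pointwise with $a=\|u\|_G$, $b=|u(x)|/\|u\|_G$ and integrate. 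The only point worth making explicit is that your dominated-convergence/continuity step needs $\rho_G(|u|)<\infty$, which is guaranteed for every $u\in L^G(\Omega)$ exactly because $p^+<\infty$ yields $\Delta_2$; with that noted, the proof is complete.
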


In order to have the Sobolev immersions, we need to require some assumptions on $G$ that are the analogous of $p<N$ in the classical cases. We  assume that $G$ satisfies:
\begin{equation}\label{GG}
\int_0^\delta \dfrac{G^{-1}(s)}{s^{(N+1)/N}}\,ds <\infty
\end{equation}for some $\delta>0$, and

\begin{equation}\label{GGG}
\int_k^\infty \dfrac{G^{-1}(s)}{s^{(N+1)/N}}\,ds =\infty
\end{equation}for some $k>0$. 

 For a given $N$-function $G$, define the  Sobolev conjugate function $G^{*}$  of $G$ by means of
 $$G^*(t)= G\circ H^{-1}(t),$$where
$$H(t)=\left(\int_0^t\left(\dfrac{\tau}{G(\tau)} \right)^{\frac{1}{N-1}}\,d\tau \right)^{\frac{N-1}{N}}.$$ 
 
 Then $G^*$ is an $N$-function and the following embedding theorem holds (see \cite{Ci}).
 
 \begin{theorem}\label{compact embedding}
 Let $\Omega$ be any open set with finite measure. Let  $G$ be an $N$-function satisfying \eqref{GG} and \eqref{GGG} and consider $G^*$. Then, the embedding $
 W_0^{1, G}(\Omega) \hookrightarrow L^{G^*}(\Omega)$ is continuous. Moreover,  there is a constant $C>0$ such that
 $$\|u\|_{G^*}\leq C\|\nabla u\|_G$$and $L^{G^*}(\Omega)$ is the smallest Orlicz space where the above inequality is true.
 
 Finally, if $\Omega$ satisfies the cone property and if $B$ is any $N$-function increasing essentially more slowly than $G^*$, then the embedding $W^{1, G}(\Omega) \hookrightarrow L^B(\Omega)$ is compact. 
 \end{theorem}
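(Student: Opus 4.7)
The plan is to establish the three assertions in order, as they build on each other. For the continuous embedding, I would follow a symmetrization approach in the spirit of Talenti adapted to Orlicz spaces. First, by density of $C_0^\infty(\Omega)$ in $W_0^{1,G}(\Omega)$ and continuity of the Luxemburg norm under modular convergence, it suffices to prove $\|u\|_{G^*} \leq C\|\nabla u\|_G$ for $u \in C_0^\infty(\Omega)$. Extending $u$ by zero to all of $\mathbb{R}^N$, I pass to its spherically symmetric decreasing rearrangement $u^\sharp$. The rearrangement preserves the distribution function, hence $\|u^\sharp\|_{G^*} = \|u\|_{G^*}$, while the P\'olya–Szeg\H{o} principle in the Orlicz setting yields $\rho_G(|\nabla u^\sharp|) \leq \rho_G(|\nabla u|)$, reducing the problem to radial non-increasing functions. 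After a standard radial parametrization and an application of the co-area formula, the gradient modular becomes a one-dimensional weighted integral against $r^{N-1}$, and the $L^{G^*}$ norm becomes a one-dimensional Orlicz norm. The remaining step is a sharp one-dimensional Hardy-type inequality whose saturation prescribes precisely the change of variables encoded in $H$; conditions \eqref{GG} and \eqref{GGG} guarantee that $H$ is finite near the origin and diverges at infinity, so $H$ is a homeomorphism of $[0,\infty)$ and the composition $G^* = G \circ H^{-1}$ is a well-defined $N$-function.

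For the optimality of $L^{G^*}$, I would argue that any target $L^B$ supporting an inequality of the form $\|u\|_B \leq C \|\nabla u\|_G$ must contain $L^{G^*}$ in the Orlicz ordering. Testing against the extremal family for the one-dimensional Hardy inequality above, suitably translated and rescaled into $\Omega$, produces a sequence with bounded gradient modular but whose $L^B$ norm blows up whenever $B$ grows essentially faster than $G^*$ at infinity, contradicting the assumed embedding.

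For the compactness statement, the argument combines classical Rellich–Kondrachov with the Vitali convergence theorem. From $p^- > 1$ and \eqref{G product} we have $G(t) \geq c\, t^{p^-}$ for $t \geq 1$, hence $W^{1,G}(\Omega) \hookrightarrow W^{1,p^-}(\Omega)$; since $\Omega$ has the cone property, the classical Rellich–Kondrachov theorem supplies, from any bounded sequence $\{u_n\} \subset W^{1,G}(\Omega)$, a subsequence converging a.e.\ and in measure to some limit $u$. The continuous embedding already proved shows that $\{u_n\}$ is uniformly bounded in $L^{G^*}(\Omega)$. The hypothesis $B \ll G^*$ means that for every $\varepsilon > 0$ there exists $M_\varepsilon$ with $B(t) \leq \varepsilon G^*(t)$ for $t \geq M_\varepsilon$; splitting integrals at the level $M_\varepsilon$ then yields uniform equi-integrability of $\{B(|u_n - u|)\}$. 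Vitali's theorem gives $\rho_B(u_n - u) \to 0$, and Lemma \ref{comp norm modular} applied to $B$ translates this into convergence in the Luxemburg norm of $L^B(\Omega)$.

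The main obstacle is the sharp one-dimensional Hardy-type inequality that underlies the continuous embedding and forces the precise definition of $G^*$ through $H$: identifying the correct $r^{N-1}$-to-$s$ change of variables and verifying that the extremals are realized exactly by the ansatz $G^* = G \circ H^{-1}$ is delicate and is the reason the construction of $G^*$ looks as it does. Once that inequality is in hand, optimality reduces to a concentration construction built from the extremals, and compactness reduces to the equi-integrability argument above, both of which are comparatively routine.
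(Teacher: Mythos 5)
This theorem is not proved in the paper at all: it is quoted as a known result, with the continuous embedding, the inequality $\|u\|_{G^*}\leq C\|\nabla u\|_G$ and its optimality taken from Cianchi \cite{Ci}, and the compact embedding into $L^B(\Omega)$ for $B\ll G^*$ being the classical Donaldson--Trudinger/Adams-type result. Your outline for the continuous part (rearrangement, P\'olya--Szeg\H{o} for the convex integrand $G(|\nabla u|)$, reduction to a sharp one-dimensional Hardy-type inequality whose form dictates $G^*=G\circ H^{-1}$) is in fact the strategy of the cited source, so in spirit you are reconstructing the right proof rather than inventing a different one. But as a proof it is incomplete exactly where the work lies: the Orlicz P\'olya--Szeg\H{o} principle and, above all, the sharp one-dimensional inequality are asserted, not established, and you acknowledge this yourself. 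The optimality argument is also stated with an imprecise negation: ``smallest Orlicz space'' means that if $\|u\|_B\leq C\|\nabla u\|_G$ holds then $B$ must be dominated by $G^*$ near infinity (so $L^{G^*}\subset L^B$); the sequence you need must blow up whenever $B$ is \emph{not} dominated by $G^*$, which is weaker than ``$B$ grows essentially faster than $G^*$'', and the concentration construction for that general case is precisely the delicate point in \cite{Ci}.

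Two further technical remarks on the compactness part. First, your reduction $W^{1,G}(\Omega)\hookrightarrow W^{1,p^-}(\Omega)$ uses \eqref{G1}, which is a standing assumption of the paper but not a hypothesis of this theorem as stated; the classical proof avoids it (compactness in $L^1$ plus uniform integrability), so you should either add \eqref{G1} to your hypotheses or switch to that route. Second, in the equi-integrability step the bound $B(t)\leq\varepsilon G^*(t)$ for $t\geq M_\varepsilon$ is only useful if you control the modular $\int_\Omega G^*(|u_n-u|)\,dx$, and norm-boundedness in $L^{G^*}$ does not give a uniform modular bound unless $G^*$ satisfies $\Delta_2$. This is fixable without extra assumptions: since $\|u_n-u\|_{G^*}\leq C$ you have $\rho_{G^*}\bigl((u_n-u)/C\bigr)\leq 1$, and the definition of $B\ll G^*$ with the constant $c=C$ gives $B(t)\leq\varepsilon G^*(t/C)$ for large $t$, which makes the splitting argument work; the same device (applying $\ll$ to $B(\lambda\,\cdot)$ for each $\lambda$) yields norm convergence without invoking Lemma \ref{comp norm modular} for $B$. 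With these repairs the compactness argument is sound; the continuous embedding and its sharpness, however, remain essentially a citation to \cite{Ci} rather than a proof.
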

 
 \begin{remark}
 We point out that when $\Omega$ has finite measure, assumption \eqref{GG} may always be obtained by replacing the given $N$-function with any $N$-function which is equivalent to the original near infinite and which makes the integral finite. 
 
 Also, if \eqref{GGG} does not hold, the embedding in Theorem \ref{compact embedding} is into $L^{\infty}(\Omega)$. 
 \end{remark}

 Finally, we point out that we will denote by $C$ universal positive constants, even if it changes from line to line in the calculations.

\section{Main assumptions on the data}\label{main assumptios datap}

Regarding the $N$-functions, we will assume that all $N$-functions satisfies assumptions \eqref{G1}, \eqref{GG} and \eqref{GGG}. For such a pair $G_1$ and $G_2$ of $N$-functions, we consider the elliptic system \eqref{main system} and  we let
$$E:=W_0^{1, G_1}(\Omega) \times W_0^{1, G_2}(\Omega)$$equipped with the norm
$$\|(u, v)\|:=\|u\|_{1, G_1}+\|v\|_{1, G_2},$$for any $(u, v)\in E$.

Regarding the $N$-functions $G_1$ and $G_2$, we assume that they satisfy \eqref{G1}, \eqref{GG} and \eqref{GGG}. Moreover, if $p_i^+$ and $p_i^-$ are the associated exponents for $i=1, 2$, we will assume that

\begin{equation}\label{exponents}
\max\left\lbrace p_1^+, p_2^+\right\rbrace < \min\left\lbrace (p_1^-)^*, (p_2^-)^*\right\rbrace.
\end{equation}

We assume that $F=F(s, t)\in C^1(\mathbb{R}^2\setminus \left\lbrace (0, 0)\right\rbrace)\cap C(\mathbb{R}^2)$, for simplicity that $F(0, 0)=0$, $F\geq 0,$ and the following growth conditions:

\begin{itemize}
\item[(F1)] Critical growth: there exists $C>0$ such that
$$|F_s(s, t)|\leq C\left((G_1^*)'(|s|)+(\tilde{G_1^*})^{-1}(G_2^*(|t|)) \right)$$and
$$|F_t(s, t)|\leq C\left((G_2^*)'(|t|)+(\tilde{G_2^*})^{-1}(G_1^*(|s|)) \right).$$
\item[(F2)]Ambrosetti-Rabinowitz's condition: there are $c_0, c_0'>0$ such that for all $s$ and $t$,
$$c_0F(s, t)\leq F_s(s, t)s+F_t(s, t)t\leq c'_0F(s, t).$$Moreover, assume that
\begin{equation}\label{assump c cero}
c_0>\max\left\lbrace p_1^{-}, p_2^{-}\right\rbrace
\end{equation}
\item[(F3)] Weak homogeneity: for all functions $(u, v)\in E$ with $u, v \geq R$ in a subdomain $\Omega' \subset \Omega$ and for some $R>0$, there holds
$$\int_\Omega F(tu, tv)\,dx \geq c\left(|t|^{(p_1^{-})^*}+|t|^{(p_2^{-})^*}\right),$$for all $t$ and where $c=c(\Omega', R, u, v)>0$.

\item[(F3)'] There is a constant $C>0$ such that for all $s$ and $t$,
$$F(s, t)\geq C\left(G_1^*(s)+G_2^*(t)\right).$$
\end{itemize}

Regarding $H=H(\cdot, s, t)\in C^1(\mathbb{R}^2\setminus \left\lbrace (0, 0)\right\rbrace)\cap C(\mathbb{R}^2)$,  $H(x, 0, 0)=0$ for all $x\in \Omega$, $H\geq 0$ and the following growth conditions are assumed:

\begin{itemize}
\item[(H1)] Subritical growth: there exist $C>0$ and  $N$-functions $G_i \ll B_i \ll G^*$ such that
$$|H_s(x, s, t)|\leq C\left((B_1)'(|s|)+(\tilde{B_1})^{-1}(B_2(|t|)) \right)$$and
$$|H_t(x, s, t)|\leq C\left((B_2)'(|t|)+(\tilde{B_2})^{-1}(B_1(|s|)) \right).$$Additionally, the following relation between exponents is assumed
\begin{equation}\label{exponents B}
\max\left\lbrace p_1^+, p_2^+\right\rbrace < \min\left\lbrace p_{B_1}^-, p_{B_2}^-\right\rbrace, \quad \max\left\lbrace p_{B_1}^+, p_{B_2}^+\right\rbrace < \min\left\lbrace (p_1^-)^*, (p_2^-)^* \right\rbrace.
\end{equation}
\item[(H2)]Ambrosetti-Rabinowitz's condition: there is $c_1>0$ such that for all $s$ and $t$,
$$H_s(x, s, t)s+H_t(x, s, t)t\geq c_1H(x, s, t).$$

Moreover, assume that
\begin{equation}\label{assump c 1}
c_1>\max\left\lbrace p_1^{-}, p_2^{-}\right\rbrace
\end{equation}
\item[(H3)]Positivity:  there is a set $\Omega_0\subset \Omega$ such that for all $r_0$ and $s$ and $t$ verifying $s, t \geq r_0$, there holds
$$H(x, u, v)>0, \text{ for all }x\in \Omega_0.$$ 
\end{itemize}

Before discussing some consequences of the above assumptions, we give  examples related to the sources in systems \eqref{example1} and \eqref{example2}.

\begin{example}
Consider the nonlinearity
$$F(s, t)=|s|^\alpha|t|^\beta, \quad \alpha, \beta >p, \alpha+\beta=p^*.$$
Then, taking $G_1(t)=G_2(t)=|t|^p$, we have from
$$\alpha-1 +\beta = p^*-1$$that
$$\dfrac{1}{\frac{p^*-1}{\alpha-1}}+\dfrac{1}{\frac{p^*-1}{\beta}}=1$$and so Young's inequality gives
$$|F_s(s, t)|= |\alpha|s|^{\alpha-2}s|t|^\beta|  \leq C\left(|s|^{p^*-1}+|t|^{p^{*}-1}\right).$$A similar argument applies to $F_t$ and hence $(F1)$ holds. Assumption $(F2)$ is straightforward. Finally, for $(H3)$, take $u, v\in W_0^{1, p}(\Omega)$ and suppose $u, v \geq R>0$ in a domain $\Omega'$. Then, for any $t\in \mathbb{R}$,
$$\int_\Omega |t u|^{\alpha}|t v|^{\beta}\,dx \geq t^{\alpha+\beta}R^{\alpha+\beta}|\Omega'|.$$Hence, recalling that $\alpha+\beta=p^{*}$, we prove that $F$ satisfies $(F3)$ as well.
\end{example}

\begin{example}
Consider
$$F(s, t)= |s|^{p_1^*}+|t|^{p_2^*}+ s|t|^{\frac{p_2^*(p_1^*-1)}{p_1^*}}+ t|s|^{\frac{p_1^*(p_2^*-1)}{p_2^*}}, \quad p_1, p_2 >1.$$Assume that
\begin{equation}\label{assump p}
\max\left\lbrace p_1, p_2 \right\rbrace \leq \min\left\lbrace p_1^*, p_2^*\right\rbrace.
\end{equation}Then
$$F_s(s, t)= p_1^*|s|^{p_1^*-2}s+  |t|^{\frac{p_2^*(p_1^*-1)}{p_1^*}} + \frac{p_1^*(p_2^*-1)}{p_2^*} t|s|^{\frac{p_1^*(p_2^*-1)}{p_2^*}-2}s.$$Take
$$\alpha= \dfrac{(p_1^*-1)p_2^*}{p_1^*(p_2^*-1)-p_2^*} \quad \text{and }\quad \beta = \dfrac{p_2^*(p_1^*-1)}{p_1^*}.$$
\end{example}Then
$$\dfrac{1}{\alpha}+\dfrac{1}{\beta}=1$$and so by Young's inequality
\begin{equation*}
 t|s|^{\frac{p_1^*(p_2^*-1)}{p_2^*}-2}s \leq C\left(|t|^\beta + |s|^{\alpha\frac{p_1^*(p_2^*-1)}{p_2^*}-1} \right)= C\left(|t|^{\frac{p_2^*(p_1^*-1)}{p_1^*}} +|s|^{p_1^*-1}\right)
\end{equation*}A similar estimate is obtained for $|F_t(s, t)|$. This shows that $F$ satisfies $(F1)$. 

Regarding $(F2)$, observe that
\begin{equation}\label{comb1}
\begin{split}
sF_s(s, t)+tF_t(s, t) &= p_1^*|s|^{p_1^*}+   s|t|^{\frac{p_2^*(p_1^*-1)}{p_1^*}} + \frac{p_1^*(p_2^*-1)}{p_2^*} t|s|^{\frac{p_1^*(p_2^*-1)}{p_2^*}}\\ &\quad + p_2^*|t|^{p_2^*}+ t|s|^{\frac{p_1^*(p_2^*-1)}{p_2^*}} + \frac{p_2^*(p_1^*-1)}{p_1^*}s|t|^{\frac{p_2^*(p_1^*-1)}{p_1^*}}\\ & =p_1^*|s|^{p_1^*}+  p_2^*|t|^{p_2^*} + \left(1+\frac{p_2^*(p_1^*-1)}{p_1^*} \right) s|t|^{\frac{p_2^*(p_1^*-1)}{p_1^*}}\\&\quad + \left(1+ \frac{p_1^*(p_2^*-1)}{p_2^*}\right)t|s|^{\frac{p_1^*(p_2^*-1)}{p_2^*}}.
\end{split}
\end{equation}The upper bound in $(F2)$ is obtained straightforward. For the lower bound, observe that
\begin{equation}\label{comb2}
1+ \frac{p_1^*(p_2^*-1)}{p_2^*} = 1+ p_1^*\left(1-\dfrac{1}{p_2^*}\right)>p_1^*.
\end{equation}Similarly,
 \begin{equation}\label{comb3}
1+ \frac{p_2^*(p_1^*-1)}{p_1^*} = 1+ p_2^*\left(1-\dfrac{1}{p_1^*}\right)>p_2^*.
\end{equation}Hence, combining \eqref{comb1}, \eqref{comb2} and \eqref{comb3} and appealing to \eqref{assump p}, we obtain
$$sF_s(s, t)+tF_t(s, t) > \min\left\lbrace p_1^*, p_2^*\right\rbrace F(s, t) \geq \max\left\lbrace p_1, p_2 \right\rbrace F(s, t).$$Finally $(F3)$ follows easily.

The following lemma provides a useful consequence of assumption $(F1)$.
\begin{lemma}\label{growth F}
Let $F$ satisfy $(F1)$. Then, there $C>0$ such that
$$|F(s, t)|\leq C\left(G_1^*(s)+G_2^*(t) \right).$$
\end{lemma}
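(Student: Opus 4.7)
The natural starting point is to represent $F$ as an integral of its gradient along the ray joining $(0,0)$ to $(s,t)$. Since $F\in C^1(\mathbb{R}^2\setminus\{(0,0)\})\cap C(\mathbb{R}^2)$ with $F(0,0)=0$, for any $(s,t)\neq (0,0)$ the map $\tau \mapsto F(\tau s,\tau t)$ is continuous on $[0,1]$ and $C^1$ on $(0,1]$, so by the fundamental theorem of calculus
$$
F(s,t)=\int_0^1 \bigl[F_s(\tau s,\tau t)\,s+F_t(\tau s,\tau t)\,t\bigr]\,d\tau,
$$
provided the integrand is integrable; I would first check this by the growth bounds coming from $(F1)$ together with the super-linearity of $G_1^{*}$ and $G_2^{*}$ at the origin, which forces $(G_i^*)'(\tau|s|)\to 0$ and $(\widetilde{G_i^*})^{-1}(G_j^*(\tau|t|))\to 0$ as $\tau\to 0^+$.

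Next I would apply $(F1)$ inside the integral, obtaining
$$
|F(s,t)|\leq C\int_0^1 \Bigl[|s|\bigl((G_1^*)'(\tau|s|)+(\widetilde{G_1^*})^{-1}(G_2^*(\tau|t|))\bigr)+|t|\bigl((G_2^*)'(\tau|t|)+(\widetilde{G_2^*})^{-1}(G_1^*(\tau|s|))\bigr)\Bigr]\,d\tau.
$$
Now Young's inequality \eqref{2.5} applied to the pair $(G_1^*,\widetilde{G_1^*})$, combined with Lemma \ref{G g} applied to the $N$-function $G_1^*$, yields
$$
|s|\,(G_1^*)'(\tau|s|) \leq G_1^{*}(|s|)+\widetilde{G_1^{*}}\!\bigl((G_1^*)'(\tau|s|)\bigr)\leq G_1^{*}(|s|)+(p_{G_1^*}^{+}-1)G_1^{*}(\tau|s|)\leq C\,G_1^{*}(|s|),
$$
where in the last step I use that $G_1^{*}$ is increasing and $\tau\leq 1$. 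The cross term is even simpler: Young's inequality gives
$$
|s|\,(\widetilde{G_1^*})^{-1}(G_2^*(\tau|t|))\leq G_1^{*}(|s|)+\widetilde{G_1^{*}}\!\bigl((\widetilde{G_1^*})^{-1}(G_2^*(\tau|t|))\bigr)= G_1^{*}(|s|)+G_2^{*}(\tau|t|)\leq G_1^{*}(|s|)+G_2^{*}(|t|).
$$

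By symmetry, the $t$-terms are estimated by $C(G_1^{*}(|s|)+G_2^{*}(|t|))$ in the same way. Integrating the resulting uniform bound over $\tau\in[0,1]$ and recalling that in our convention $G_i^{*}(s)=G_i^{*}(|s|)$ since $G_i^{*}$ is even, produces the claimed inequality. The main technical point is the interplay between Young's inequality and Lemma \ref{G g}, which plays the role of the identity $|s|^{p^*-1}\cdot|s|\simeq |s|^{p^*}$ from the power-function case; once this is isolated, the rest of the argument is a mechanical ray-integration. The only mild subtlety is justifying the integral representation near $\tau=0$, which I handle by the super-linearity of the Sobolev conjugate $N$-functions at zero.
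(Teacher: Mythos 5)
Your argument is correct and rests on the same two ingredients as the paper's proof: the fundamental theorem of calculus applied to $F$ together with $(F1)$, and Young's inequality \eqref{2.5} to absorb the cross terms $\widetilde{G_1^*}^{-1}(G_2^*(\cdot))\,|s|$ and $\widetilde{G_2^*}^{-1}(G_1^*(\cdot))\,|t|$. The difference is the path: the paper integrates one variable at a time, writing $|F(s,t)|\leq \int_0^{|s|}|F_s(r,t)|\,dr+|F(0,t)|$ and then treating $|F(0,t)|$ by a second one-dimensional integration, so the diagonal term integrates exactly, $\int_0^{|s|}(G_1^*)'(r)\,dr=G_1^*(|s|)$, and no further lemma is needed. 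Your ray parametrization $\tau\mapsto F(\tau s,\tau t)$ produces instead the term $|s|\,(G_1^*)'(\tau|s|)$, which you control via Young's inequality plus Lemma \ref{G g} applied to $G_1^*$; this is fine, but it silently uses that $G_i^*$ itself satisfies a condition of type \eqref{eq p mas} with a finite exponent (essentially $(p_i^+)^*$, which requires $p_i^+<N$) — a fact that is standard in this framework and implicitly used elsewhere in the paper (e.g.\ for the boundedness of $F_s(u_n,v_n)$ in $L^{\widetilde{G_1^*}}(\Omega)$), but which you should state or, better, avoid altogether by the substitution $r=\tau|s|$, which gives $\int_0^1 |s|\,(G_1^*)'(\tau|s|)\,d\tau=G_1^*(|s|)$ exactly. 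Your handling of the $\tau\to 0^+$ issue (continuity of $F$ at the origin plus boundedness of the integrand, using $h(0)=0$ for the N-functions $G_i^*$) is adequate, and the paper's coordinate-wise proof faces the same minor point when $t=0$ without comment, so on balance the two proofs are of equal rigor, with yours trading a slightly longer diagonal estimate for a single symmetric integration.
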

\begin{proof}
Let $s, t \in \mathbb{R}$. Then
\begin{equation}
\begin{split}
|F(s, t)|&\leq \int_0^{|s|} |F_s(r, t)|\,dr + |F(0, t)| \\ & \leq  C\int_0^{|s|} \left((G^*_1)'(r) + \tilde{G_1^*}^{-1}(G^*_2(t)) \right)\,dx + |F(0, t)|\\& = C\left( G_1^*(|s|) + \tilde{G_1^*}^{-1}(G^*_2(t))|s| \right)+ |F(0, t)|.
\end{split}
\end{equation}By Young's inequality \eqref{2.5}, we obtain
$$|F(s, t)| \leq  C\left(G_1^*(s) +G^*_2(t)\right) + |F(0, t)|.$$Now, reasoning similarly and recalling that $F(0, 0)=0$, we get
$$|F(0, t)|\leq CG_2^*(t).$$This ends the proof.
\end{proof}
\begin{remark}By reasoning as in Lemma \ref{growth F}, we obtain that $H$ satisfies
\begin{equation}\label{growth H}
|H(s, t)|\leq C\left(B_1(|s|)+B_2(|t|)\right).
\end{equation}
\end{remark}

\section{Preliminary results}\label{preliminary results}

In this section, we provide the auxiliary main results that we will employ in the proof of the Theorem \ref{main theorem}. 

The associated energy functional $\Phi=\Phi_\lambda$ to the system \eqref{main system} is
$$\Phi_\lambda(u, v)=\int_\Omega G_1(|\nabla u|)\,dx + \int_\Omega G_2(|\nabla v|)\,dx -\int_\Omega F(u, v)\,dx -\lambda \int_\Omega H(x, u, v)\,dx.$$

Since our approach is variations, weak solutions of \eqref{main system} are precisely those couples $(u, v)\in E$ such that
$$\left\langle \Phi'(u, v), (\phi, \varphi)\right\rangle=0,$$for any $\phi, \varphi\in C_0^\infty(\Omega)$.

Due to the growth rates of the right hand sides of system \eqref{main system}, the functional $\Phi$
 may not satisfies the Palais-Smale condition. So, we provide  the following Mountain Pass Theorem without the Palais-Smale condition. Its proof can be found in \cite[pag. 272]{AE}. 
\begin{theorem}\label{MPL}
Let $E$ be a Banach space and $I \in C^1(E, \mathbb{R})$. Suppose that there exist a neighbourhood $U$ of $0$ in $E$ and a constant $\alpha$ satisfying the following conditions 
\begin{itemize}
\item[(i)] $I(u) \geq \alpha$ for all $u \in \partial U$,
\item[(ii)] $I(0)< \alpha$,
\item[(iii)] there exists $u_0 \notin U$ satisfying $I(u_0)< \alpha$.
\end{itemize}Let
$$\Gamma = \left\lbrace \gamma \in C([0, 1], E): \gamma(0)=0,\,\gamma(1)=u_0\right\rbrace$$and
$$c=\inf_{\gamma \in \Gamma}\max_{u \in \gamma([0, 1])}I(u) \geq \alpha.$$Then, there exists a sequence $u_k \in E$ such that
$$I(u_k)\to c \quad \text{and }\quad I'(u_k)\to 0 \,\,\text{in }E',$$as $k \to \infty$.
\end{theorem}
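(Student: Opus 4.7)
The plan is to argue by contradiction using a quantitative deformation lemma of Palais-Smale-Clark type, which in the Banach-space setting is built from a pseudo-gradient vector field. An entirely equivalent route, probably the one actually taken in \cite{AE}, is to apply Ekeland's variational principle to the ``max'' functional on the space of paths; I would personally set up the deformation argument.

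First I would verify that the min-max value $c$ satisfies $c \geq \alpha$. Any path $\gamma \in \Gamma$ has $\gamma(0)=0 \in U$ and $\gamma(1)=u_0 \notin U$, so by continuity and connectedness of $[0,1]$ there exists $t^{*} \in (0,1)$ with $\gamma(t^{*}) \in \partial U$; hypothesis (i) then forces $\max_{t} I(\gamma(t)) \geq \alpha$, and taking the infimum over $\gamma$ gives $c \geq \alpha$. Combining this with (ii) and (iii) yields the strict separation $c > \max\{I(0), I(u_0)\}$, which is what will let the deformation fix the endpoints.

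Second, suppose the conclusion fails. Then there exist $\varepsilon_{0} > 0$ and $\delta > 0$ such that
\[
\|I'(u)\|_{E'} \geq \delta \quad \text{whenever} \quad |I(u)-c| \leq 2\varepsilon_{0}.
\]
Shrink $\varepsilon_{0}$ if needed so that $2\varepsilon_{0} < c - \max\{I(0), I(u_0)\}$. Using a locally Lipschitz pseudo-gradient vector field for $I$ on $\{u : I'(u)\neq 0\}$ (obtained by Palais' partition-of-unity construction, which is the workhorse replacing the Hilbert gradient flow), I would integrate a suitably truncated and rescaled ODE to construct a continuous deformation $\eta\colon [0,1]\times E\to E$ with $\eta(0,u)=u$, $\eta(s,u)=u$ whenever $|I(u)-c| \geq 2\varepsilon_{0}$ (so in particular $\eta(s,0)=0$ and $\eta(s,u_{0})=u_{0}$), and, for some $\varepsilon \in (0, \varepsilon_{0})$ fixed in advance,
\[
I(\eta(1,u)) \leq c-\varepsilon \quad \text{whenever} \quad I(u) \leq c+\varepsilon.
\]

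Third, by definition of $c$ I pick $\gamma \in \Gamma$ with $\max_{t} I(\gamma(t)) \leq c+\varepsilon$ and set $\tilde\gamma(t) := \eta(1,\gamma(t))$. Since $\eta(1,\cdot)$ fixes $0$ and $u_{0}$, the new curve $\tilde\gamma$ still belongs to $\Gamma$, and by construction $\max_{t} I(\tilde\gamma(t)) \leq c-\varepsilon$, contradicting the definition of $c$. The main obstacle is the construction of the deformation $\eta$ in a general Banach space without a gradient flow: existence of a locally Lipschitz pseudo-gradient, together with a careful choice of the truncating cutoff (one that is $0$ outside the band $\{|I-c|\leq 2\varepsilon_{0}\}$ and $1$ inside $\{|I-c|\leq \varepsilon_{0}\}$) and a rescaling by $1/\|V(u)\|$ that keeps the flow well-defined for all time, are the technical core of the argument; once $\eta$ is in hand the rest is a direct sup-chasing on paths.
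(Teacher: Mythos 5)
Your argument is correct, and it is a genuinely different route from the one the paper relies on. The paper does not prove Theorem \ref{MPL} at all: it cites Aubin--Ekeland \cite[p.~272]{AE}, where the Palais--Smale sequence at level $c$ is produced by applying Ekeland's variational principle to the functional $\gamma \mapsto \max_{t\in[0,1]} I(\gamma(t))$ on the complete metric space $\Gamma$; the almost-minimizing, almost-critical path produced by Ekeland's principle is then converted into a point $u_k$ with $I(u_k)$ near $c$ and $\|I'(u_k)\|$ small. You instead negate the conclusion to get a band $\{|I-c|\le 2\varepsilon_0\}$ on which $\|I'\|\ge\delta$, invoke the quantitative deformation lemma (pseudo-gradient field via Palais' partition-of-unity construction, cutoff supported in the band, rescaled flow), and derive a contradiction by deforming an almost-optimal path below level $c-\varepsilon$ while keeping the endpoints fixed -- the endpoint-fixing being legitimate precisely because $c\ge\alpha>\max\{I(0),I(u_0)\}$, as you checked, and your preliminary crossing argument for $c\ge\alpha$ (the path must meet $\partial U$, and $u_0\notin\partial U$ since otherwise (i) contradicts (iii)) is also sound. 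The trade-off: Ekeland's route avoids ODE flows and pseudo-gradients entirely and yields the Palais--Smale sequence directly, while your deformation route is the classical Ambrosetti--Rabinowitz mechanism and requires the technical core you correctly identify (local Lipschitz pseudo-gradient, the quantitative coupling of $\varepsilon$ to $\delta$ so that the unit-time flow drops $I$ below $c-\varepsilon$, and global solvability of the truncated ODE). Both are standard and fully valid for $C^1$ functionals on a Banach space; only the minor imprecision that $\varepsilon$ cannot literally be ``fixed in advance'' but must be chosen small relative to $\delta$ (as the deformation lemma dictates) would need to be tightened in a complete write-up.
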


A sequence satisfying the conclusion of Theorem \ref{MPL} is called a Palais-Smale sequence.

 When applying Theorem \eqref{MPL} to $\Phi$ we will find a sequence $(u_k, v_k)$ which is a Palais-Smale sequence for $\Phi$ and that will be bounded.  The next  concentration compactness result obtained in \cite{FIN} will help us to deal with such a sequence (see also \cite{FS} where a  sharper result was obtain).

\begin{theorem}\label{CCP}
Let $w_k\in W_0^{1, G}(\Omega)$ be bounded and converging weakly to $w \in W_0^{1, G}(\Omega)$. Then, there exist an at most countable set $J$, a family of distinct points $\{x_j\}_{j\in J}$ and positive constants $\nu_j$ such that
$$G^*(|w_k|)\,dx \rightharpoonup \nu \text{ in }\mathcal{M}(\mathbb{R}^n),$$
$$G(|\nabla w_k|)\,dx \rightharpoonup \mu \text{ in }\mathcal{M}(\mathbb{R}^n),$$where the nonnegative Radon measures $\mu$ and $\nu$ satisfy:
$$\nu= G^*(|w|)\,dx+\sum_{j\in J}\nu_j\delta_{x_j},$$
$$\mu \geq G^*(|\nabla w|)\,dx+\sum_{j\in J}\mu_j\delta_{x_j}.$$Moreover, the coefficients $\mu_j$ and $\nu_j$ verify
$$0< \nu_j \leq \max\left\lbrace S_0^{(p^-)^*}\mu_j^{(p^-)^*/p^{-}},S_0^{(p^+)^*}\mu_j^{(p^+)^*/p^{-}}, S_0^{(p^-)^*}\mu_j^{(p^-)^*/p^{+}}, S_0^{(p^-)^*}\mu_j^{(p^+)^*/p^{+}} \right\rbrace.$$
\end{theorem}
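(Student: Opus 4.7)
The plan is to mimic P.-L. Lions' classical second concentration--compactness argument, with the key adaptations needed to handle the non-homogeneous norm/modular correspondence characteristic of Orlicz spaces (captured by Lemma \ref{comp norm modular}).

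\textbf{Step 1: Reduction to the case $w\equiv 0$.} I would set $v_k:=w_k-w$ so that $v_k\rightharpoonup 0$ in $W_0^{1,G}(\Omega)$. Since both $G$ and $\widetilde{G}$ satisfy $\Delta_2$ (by \eqref{G1}), and likewise $G^\ast$ by construction, a Brezis--Lieb type identity in Orlicz spaces yields
\[
\int_\Omega G(|\nabla w_k|)\psi\,dx - \int_\Omega G(|\nabla v_k|)\psi\,dx \longrightarrow \int_\Omega G(|\nabla w|)\psi\,dx,
\]
and analogously for $G^\ast(|w_k|)$, for every $\psi\in C_c(\mathbb{R}^N)$. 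This proves the equality $\nu = G^\ast(|w|)\,dx + \bar{\nu}$ and the inequality $\mu \geq G(|\nabla w|)\,dx + \bar{\mu}$, where $\bar{\mu},\bar{\nu}$ are the weak-$\ast$ limits associated to $v_k$. It then suffices to show that $\bar{\nu}$ is purely atomic and that the atoms satisfy the stated bound against $\bar{\mu}$.

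\textbf{Step 2: Localized Sobolev inequality on the limiting measures.} For $\varphi\in C_c^\infty(\mathbb{R}^N)$, I would apply Theorem \ref{compact embedding} to $\varphi v_k\in W_0^{1,G}(\Omega)$, obtaining $\|\varphi v_k\|_{G^\ast}\leq S_0\|\nabla(\varphi v_k)\|_G$. The compact embedding into the subcritical space $L^G(\operatorname{supp}\varphi)$ (guaranteed by the last part of Theorem \ref{compact embedding}, since $G\ll G^\ast$) forces $v_k\nabla\varphi\to 0$ strongly in $L^G$, hence $\|\nabla(\varphi v_k)\|_G = \|\varphi\,\nabla v_k\|_G + o(1)$. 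Translating norms into modulars via Lemma \ref{comp norm modular} ($\xi^-(\|f\|_G)\leq \rho_G(f)\leq \xi^+(\|f\|_G)$) and passing to the limit yields the measure-theoretic Sobolev inequality
\[
\|\varphi\|_{L^{G^\ast}(\bar{\nu})}\leq S_0\,\|\varphi\|_{L^{G}(\bar{\mu})}
\qquad\text{for all } \varphi\in C_c^\infty(\mathbb{R}^N).
\]

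\textbf{Step 3: Atomic decomposition and quantitative bound.} The Orlicz reverse-Hölder inequality above, combined with an argument of Lions (the absolutely continuous part of $\bar{\nu}$ must vanish), implies that $\bar{\nu}=\sum_{j\in J}\nu_j\delta_{x_j}$ with $J$ at most countable (by finite mass of $\bar\nu$), and that $\bar{\mu}\geq\sum_{j\in J}\mu_j\delta_{x_j}$ with $\mu_j:=\bar{\mu}(\{x_j\})>0$. To quantify $\nu_j$ against $\mu_j$, I would substitute a smooth cutoff $\varphi_{\varepsilon}$ concentrated at $x_j$ with $\varphi_{\varepsilon}(x_j)=1$ into the measure Sobolev inequality, and let $\varepsilon\to 0$ to obtain $\nu_j^{\,1/\alpha}\leq S_0\,\mu_j^{\,1/\beta}$ up to the non-linear $\xi^\pm$ corrections. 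Because $\xi^-(s)=\min\{s^{p^-},s^{p^+}\}$ and $\xi^+(s)=\max\{s^{p^-},s^{p^+}\}$, rearranging in the four possible cases (according to whether each side is above or below $1$, which determines whether $p^-$ or $p^+$ is active) yields exactly the four-term maximum in the stated bound, with the exponents $(p^-)^\ast,(p^+)^\ast$ coming from the $G^\ast$-side and $p^-,p^+$ from the $G$-side.

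\textbf{Main obstacle.} The hard point is precisely the non-homogeneity of Orlicz norms: unlike in the $L^p$ framework, one cannot scale both sides of Sobolev and read off a single reverse-Hölder exponent. Managing the four regimes produced by the modular/norm transition $\xi^{\pm}$ \emph{simultaneously for both $G$ and $G^\ast$}, and ensuring the passage to the limit in Step 2 is compatible with all of them, is what makes the statement (and the four-term maximum) more delicate than the classical case. A secondary technical point is the Brezis--Lieb step in Step 1, which in this generality relies crucially on $G,G^\ast\in\Delta_2$ to guarantee continuity of the associated Nemitskii operators.
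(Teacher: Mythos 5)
You should first note that the paper does not prove Theorem \ref{CCP} at all: it is quoted verbatim from the reference \cite{FIN} (see also \cite{FS} for a sharper version), and is used as a black box in Section \ref{proof of main theorem}. So the only meaningful comparison is with the known proof in those references, whose overall strategy (Lions' second concentration--compactness argument, run on modulars and converted to norms through the functions $\xi^{\pm}$ of Lemma \ref{comp norm modular}) your outline does follow in spirit.

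There is, however, a genuine gap in your Step 1. The Brezis--Lieb splitting for the gradient term, $\int_\Omega G(|\nabla w_k|)\psi\,dx-\int_\Omega G(|\nabla v_k|)\psi\,dx\to\int_\Omega G(|\nabla w|)\psi\,dx$, requires almost everywhere convergence of $\nabla w_k$ to $\nabla w$, which is not available: you only have weak convergence of the gradients in $L^{G}$, and in fact in the present paper a.e.\ convergence of gradients is only obtained \emph{after} the concentration--compactness step (Lemma \ref{conv gradients} together with the Dal Maso--Murat argument), so it cannot be an ingredient of the proof of Theorem \ref{CCP} itself. The correct, and standard, route for the $\mu$-inequality avoids Brezis--Lieb entirely: for every open $U$ the map $u\mapsto\int_U G(|\nabla u|)\,dx$ is convex and strongly lower semicontinuous, hence weakly lower semicontinuous, which gives $\mu(U)\geq\int_U G(|\nabla w|)\,dx$; the atomic part $\sum_j\mu_j\delta_{x_j}$ is then added because it is singular with respect to Lebesgue measure. (The Brezis--Lieb step is legitimate only for $G^*(|w_k|)$, where $w_k\to w$ a.e.\ via the compact subcritical embedding.) A secondary, more presentational issue is the displayed ``measure Sobolev inequality'' $\|\varphi\|_{L^{G^*}(\bar\nu)}\leq S_0\|\varphi\|_{L^G(\bar\mu)}$ in Step 2: since $G^*(|\varphi v_k|)$ does not factor as a $\varphi$-weight times $G^*(|v_k|)$, one only gets inequalities with $\xi^{\pm}(|\varphi|)$ weights on each side, and the four-regime bookkeeping leading to the stated maximum must be carried out on limsups of modulars with cutoffs $\varphi_\varepsilon$ concentrating at $x_j$, which is exactly how the bound $0<\nu_j\leq\max\{\dots\}$ arises in \cite{FIN}. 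You flag this difficulty, but as written the inequality is not literally meaningful without defining those Luxemburg norms with respect to the limit measures and proving the $\xi^{\pm}$-corrected version.
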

The constant $S_0>0$ is the best constant in the embedding $W_0^{1, G}(\Omega)\hookrightarrow L^{G^*}(\Omega)$. Observe that in view of \eqref{exponents}  all the powers of $\mu_j$ are greater than 1.

We end the section with a numerical lemma.
\begin{lemma}\label{numerical lemma}
Let $a, b>0$ and $p, q \in (0, 1)$. Then
$$a^p+b^q \geq (a+b)^\alpha,$$for some $\alpha>0$ which takes some of the following values: $p$, $q$ or $p/2+q/2$.
\end{lemma}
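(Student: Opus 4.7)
The plan is to run a case analysis based on the position of $a$ and $b$ relative to $1$, using two elementary tools: the subadditivity inequality $(x+y)^r \le x^r + y^r$ for $x,y\ge 0$ and $r\in(0,1]$, together with the tangent line bound $(x+y)^r \le x^r + r x^{r-1} y$ coming from concavity of $t\mapsto t^r$ on $(0,\infty)$. In every case the chosen $\alpha$ will turn out to be either $p$ or $q$, which is allowed by the statement.

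I would first dispose of the two symmetric cases. If $a,b\le 1$, set $\alpha=\max(p,q)$. Because $s\mapsto a^s$ is non-increasing on $(0,1]$ when $a\in(0,1]$, one has $a^p\ge a^\alpha$ and $b^q\ge b^\alpha$; subadditivity then gives $a^p+b^q\ge a^\alpha+b^\alpha \ge (a+b)^\alpha$. If instead $a,b\ge 1$, set $\alpha=\min(p,q)$; now $s\mapsto a^s$ is non-decreasing on $[1,\infty)$, so again $a^p\ge a^\alpha$ and $b^q\ge b^\alpha$, and subadditivity closes the estimate.

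The real work is in the mixed case, which by symmetry I may take to be $a\ge 1 > b > 0$. If $q\le p$, the same monotonicity trick still applies with $\alpha=q$: since $a\ge 1$ and $p\ge q$ we have $a^p\ge a^q$, and subadditivity yields $(a+b)^q\le a^q+b^q\le a^p+b^q$. The main obstacle is the remaining subcase $p<q$, where $a^p$ and $b^q$ sit on the ``wrong'' sides of $1$ and the naive monotonicity bound breaks. To handle it I would choose $\alpha=p$ and use the tangent line bound arising from concavity of $t\mapsto t^p$:
\[
(a+b)^p \;\le\; a^p + p\,a^{p-1}\,b.
\]
Since $a\ge 1$ and $p-1\le 0$ one has $pa^{p-1}\le 1$, so $(a+b)^p\le a^p+b$. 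On the other hand $b\in(0,1)$ and $q\in(0,1)$ force $b^q\ge b$ (equivalently $b^{q-1}\ge 1$), and combining the two gives $(a+b)^p\le a^p+b^q$, as required. The symmetric subcase $b\ge 1 > a$ is identical with the roles of $a,b$ and of $p,q$ swapped.

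I expect the concavity/tangent-line step in the mixed regime with $p<q$ to be the only non-routine point; the other cases are pure monotonicity plus subadditivity. The value $\alpha=(p+q)/2$ listed in the statement is never forced by this argument, but since the conclusion allows any of $p,q,(p+q)/2$, producing $\alpha\in\{p,q\}$ in each case is enough.
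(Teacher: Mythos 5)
Your proof is correct, and in the mixed case it takes a genuinely different (and in fact more robust) route than the paper. The paper treats the case $a\geq 1>b$ only under the extra assumption $p\geq q$, where the choice $\alpha=(p+q)/2$ works by pure monotonicity plus subadditivity, since then $a^{(p+q)/2}\leq a^p$ and $b^{(p+q)/2}\leq b^q$; the remaining subcase $p<q$ is dismissed with ``for simplicity'', but it is not symmetric to the treated one, and there the intermediate exponent actually fails: for $a=100$, $b=1/2$, $p=0.1$, $q=0.9$ one has $a^p+b^q\approx 2.1$ while $(a+b)^{(p+q)/2}\approx 10$. Your argument covers exactly this subcase by switching to $\alpha=p$ and replacing subadditivity with the tangent-line bound $(a+b)^p\leq a^p+p\,a^{p-1}b\leq a^p+b\leq a^p+b^q$, which is valid since $a\geq 1$, $p\in(0,1)$ and $b,q\in(0,1)$; the other mixed subcase $q\leq p$ you settle with $\alpha=q$ and plain subadditivity. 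So whereas the paper's proof buys a single uniform exponent $(p+q)/2$ in the mixed regime at the cost of an unjustified restriction, your case split with $\alpha\in\{p,q\}$ (never needing $(p+q)/2$ at all) is elementary, complete, and effectively repairs the gap; the only extra ingredient is the concavity estimate, which is standard. One cosmetic remark: since the paper later invokes the lemma with ``$\alpha$ depending on the exponents'', it is worth recording explicitly, as you do, which value of $\alpha$ is produced in each case.
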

\begin{proof}
Suppose first that $a, b <1$. Then,
$$(a+b)^{\max\left\lbrace p, q\right\rbrace} \leq a^{\max\left\lbrace p, q\right\rbrace} + b^{\max\left\lbrace p, q\right\rbrace} \leq a^p+b^q,$$which proves the lemma in this case. Next, assume that $a, b\geq 1$. Then,
$$(a+b)^{\min\left\lbrace p, q\right\rbrace} \leq a^{\min\left\lbrace p, q\right\rbrace} + b^{\min\left\lbrace p, q\right\rbrace} \leq a^p+b^q.$$Finally, suppose that $a\geq 1$ and $b <1$. Assume for simplicity that $p\geq q$. Then,
$$(a+b)^{p/2+q/2}\leq a^{p/2+q/2}+b^{p/2+q/2} \leq a^p + b^q.$$This ends the proof of the lemma.
\end{proof}

\section{Proof of Theorem \ref{main theorem}}\label{proof of main theorem}
  We divide the proof of Theorem \ref{main theorem} into several steps. 
  
  \subsection{Geometry of the energy functional and boundedness of the Palais-Smale sequence} In this part, we will prove that the energy functional $\Phi$ satisfies the geometric conditions of Theorem \ref{MPL}. We start with the next lemma.
  
  \begin{lemma}There are $\alpha>0$ and $\rho>0$ such that for all $(u, v)\in E$ with $\|(u, v)\|=\rho$, there holds
  $$\Phi(u, v)\geq \alpha.$$
  \end{lemma}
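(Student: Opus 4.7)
The plan is to show that for $\|(u,v)\|$ small on the sphere of radius $\rho$, the two positive modular terms $\int G_i(|\nabla u|)$ dominate the nonlinear contributions $\int F$ and $\lambda \int H$. The key structural input is the exponent condition \eqref{exponents}, together with \eqref{exponents B}, which forces the natural growth exponents $p_i^+$ to be strictly smaller than the critical exponents $(p_i^-)^*$ and the subcritical exponents $p_{B_i}^-$.

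First I would estimate the positive part from below. By Lemma~\ref{comp norm modular} applied to $G_1$ and $G_2$ with the norms $\|u\|=\|\nabla u\|_{G_1}$ and $\|v\|=\|\nabla v\|_{G_2}$, once $\rho$ is small enough that these norms are at most $1$, we have
\[
\int_\Omega G_1(|\nabla u|)\,dx \;\geq\; \|u\|^{p_1^+}, \qquad \int_\Omega G_2(|\nabla v|)\,dx \;\geq\; \|v\|^{p_2^+}.
\]
Second, I would estimate the $F$ term from above via Lemma~\ref{growth F}, the continuous Sobolev embedding $W_0^{1,G_i}(\Omega)\hookrightarrow L^{G_i^*}(\Omega)$ of Theorem~\ref{compact embedding}, and Lemma~\ref{comp norm modular} applied to the $N$-functions $G_i^*$ (whose associated exponents are $(p_i^-)^*$ and $(p_i^+)^*$). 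For small norms this yields
\[
\int_\Omega F(u,v)\,dx \;\leq\; C\bigl(\|u\|^{(p_1^-)^*}+\|v\|^{(p_2^-)^*}\bigr).
\]
Analogously, from \eqref{growth H}, the embedding into $L^{B_i}(\Omega)$ (compact by Theorem~\ref{compact embedding}), and Lemma~\ref{comp norm modular} for $B_i$, I would obtain
\[
\lambda\int_\Omega H(x,u,v)\,dx \;\leq\; \lambda C\bigl(\|u\|^{p_{B_1}^-}+\|v\|^{p_{B_2}^-}\bigr).
\]

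Combining the three estimates gives
\[
\Phi(u,v) \;\geq\; \|u\|^{p_1^+}+\|v\|^{p_2^+} - C\bigl(\|u\|^{(p_1^-)^*}+\|v\|^{(p_2^-)^*}\bigr) - \lambda C\bigl(\|u\|^{p_{B_1}^-}+\|v\|^{p_{B_2}^-}\bigr).
\]
To turn this into a bound in terms of $\|(u,v)\|=\|u\|+\|v\|=\rho$, I would argue that when $\|u\|+\|v\|=\rho$, at least one of the two terms is comparable to $\rho$, so $\|u\|^{p_1^+}+\|v\|^{p_2^+}\geq c\,\rho^{\max\{p_1^+,p_2^+\}}$ for $\rho\leq 1$ (a two-case argument, or a direct application of the spirit of Lemma~\ref{numerical lemma}). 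The subtracted terms are bounded above by $C'\rho^{\min\{(p_1^-)^*,(p_2^-)^*\}}+\lambda C'\rho^{\min\{p_{B_1}^-,p_{B_2}^-\}}$.

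The conditions \eqref{exponents} and \eqref{exponents B} guarantee that both exponents in the negative terms are strictly larger than $\max\{p_1^+,p_2^+\}$, so the leading behaviour as $\rho\to 0^+$ is $c\,\rho^{\max\{p_1^+,p_2^+\}}$. I would therefore fix $\rho>0$ sufficiently small (depending on $\lambda$, but $\lambda$ is fixed in this geometric step) to ensure that the two subtracted terms together are at most $c\,\rho^{\max\{p_1^+,p_2^+\}}/2$, and set $\alpha:=c\,\rho^{\max\{p_1^+,p_2^+\}}/2>0$. The main obstacle, which is really only a bookkeeping issue, is handling the asymmetry between the two coordinates: making sure the comparison $\|u\|^{p_1^+}+\|v\|^{p_2^+}\geq c\rho^{\max\{p_1^+,p_2^+\}}$ is valid uniformly on the sphere $\|u\|+\|v\|=\rho$, and confirming that $H\geq 0$ does not cause problems (it produces the $-\lambda\int H$ term which must be absorbed, as done above).
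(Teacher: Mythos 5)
Your proposal is correct and follows essentially the same route as the paper: bound the modulars from below by $\|u\|^{p_1^+}+\|v\|^{p_2^+}$, bound $\int F$ and $\lambda\int H$ from above via Lemma~\ref{growth F}, \eqref{growth H}, the embeddings of Theorem~\ref{compact embedding} and the modular--norm comparison, and then use \eqref{exponents} and \eqref{exponents B} to see that the leading power $\rho^{\max\{p_1^+,p_2^+\}}$ dominates for small $\rho$. The extra care you take with the asymmetry on the sphere $\|u\|+\|v\|=\rho$ is exactly the step the paper performs implicitly in its second inequality.
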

  
  \begin{proof}
  Assume that $\rho>0$ is small and take $(u,v)\in E$ with $\|(u, v)\|=\rho$. Then, by Lemma \ref{growth F}, the estimate \eqref{growth H}, and Theorem \ref{compact embedding},
  \begin{equation*}
  \begin{split}
  \Phi(u, v)&\geq \|u\|_{1, G_1}^{p_1^+}+\|v\|_{1, G_2}^{p_2^+}-\left(\|u\|_{G_1^*}^{(p_1^-)^*}+\|v\|_{G_2^*}^{(p_2^-)^*}\right)-\lambda\left(\|u\|_{B_1}^{P_{B_1}^-}+\|v\|_{B_2}^{P_{B_2}^-} \right) \\ & \geq C(\|u\|_{1, G_1}+\|v\|_{1, G_2})^{\max\left\lbrace p_1^+, p_2^+\right\rbrace} -C\left(\|u\|_{1,G_1}^{(p_1^-)^*}+\|v\|_{1, G_2}^{(p_2^-)^*}\right) -\lambda C\left(\|u\|_{1,G_1}^{P_{B_1}^-}+\|v\|_{1, G_2}^{P_{B_2}^-} \right)\\& \geq C\left(\rho^{\max\left\lbrace p_1^+, p_2^+\right\rbrace}-\rho^{(p_1^-)^*}-\rho^{(p_2^-)^*}-\rho^{p_{B_1}^-}-\rho^{p_{B_2}^-}\right) \geq \alpha,
  \end{split}
  \end{equation*}for some $\alpha>0$ taking $\rho$ small and recalling the assumptions \eqref{exponents} and \eqref{exponents B} which gives $\max\left\lbrace p_1^+, p_2^+\right\rbrace<(p_i^-)^*$ and $\max\left\lbrace p_1^+, p_2^+\right\rbrace< p_{B_i}^{-}$ for $i=1, 2$.
  \end{proof}
  
  \begin{lemma}
  There exists $(u_0, v_0)\notin B(0, \rho)$ such that
  $$\Phi(u_0, v_0)<\alpha.$$
  \end{lemma}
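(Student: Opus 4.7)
The plan is to take $(u_0,v_0)=(t\varphi,t\varphi)$ for a fixed bump function $\varphi\in C_0^\infty(\Omega)$ and a scalar $t$ chosen large. First I would pick $\varphi\in C_0^\infty(\Omega)$ with $\varphi\geq R+1$ on some subdomain $\Omega'\Subset\Omega$ (for the constant $R>0$ appearing in hypothesis $(F3)$). Then $(\varphi,\varphi)\in E$ and, by $(F3)$,
$$\int_\Omega F(t\varphi,t\varphi)\,dx \;\geq\; c\bigl(|t|^{(p_1^-)^*}+|t|^{(p_2^-)^*}\bigr)\qquad\text{for all }t\in\mathbb{R},$$
where $c=c(\Omega',R,\varphi)>0$.

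Next I would bound the gradient modulars from above. For $t\geq 1$, inequality \eqref{G product} yields
$$\int_\Omega G_i\bigl(|t\nabla \varphi|\bigr)\,dx \;\leq\; t^{p_i^+}\int_\Omega G_i\bigl(|\nabla\varphi|\bigr)\,dx \;=:\; C_i t^{p_i^+},\quad i=1,2.$$
The subcritical term $-\lambda\int_\Omega H(x,t\varphi,t\varphi)\,dx$ is non-positive since $H\geq 0$ and $\lambda>0$, so it may be simply dropped in an upper estimate. Altogether, for $t\geq 1$,
$$\Phi(t\varphi,t\varphi)\;\leq\; C_1 t^{p_1^+}+C_2 t^{p_2^+}\;-\;c\bigl(t^{(p_1^-)^*}+t^{(p_2^-)^*}\bigr).$$

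Now I would invoke assumption \eqref{exponents}, which gives $\max\{p_1^+,p_2^+\}<\min\{(p_1^-)^*,(p_2^-)^*\}$; therefore the critical powers dominate and the right-hand side tends to $-\infty$ as $t\to\infty$. Hence there exists $t_0\geq 1$ (in fact, arbitrarily large) with $\Phi(t_0\varphi,t_0\varphi)<\alpha$ (indeed one can arrange $\Phi(t_0\varphi,t_0\varphi)<0$). Finally, since the Luxemburg norm is positively homogeneous, $\|(t_0\varphi,t_0\varphi)\|=t_0\|(\varphi,\varphi)\|$, which exceeds $\rho$ once $t_0$ is enlarged if necessary. Setting $(u_0,v_0):=(t_0\varphi,t_0\varphi)$ then meets both requirements.

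There is no genuine obstacle: the whole argument is a modular scaling computation, and the only ingredient that could fail is the dominance of the critical exponents over the upper growth exponents $p_i^+$ of $G_i$, which is exactly what \eqref{exponents} guarantees. The rest reduces to the $\Delta_2$-type power comparison \eqref{G product}, hypothesis $(F3)$, the sign of $H$, and the homogeneity of the Luxemburg norm.
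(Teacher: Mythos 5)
Your proposal is correct and follows essentially the same route as the paper: choose smooth functions exceeding $R$ on a subdomain so that $(F3)$ applies, drop the nonnegative $H$-term, bound the gradient modulars by $t^{p_i^+}$ via \eqref{G product}, and let $t\to\infty$ using $p_i^+<(p_i^-)^*$ from \eqref{exponents}. The only cosmetic differences are that you use a single bump $\varphi$ for both components and argue the norm condition by homogeneity, whereas the paper phrases the modular bounds through the norms $\|u_0\|_{1,G_i}$; the substance is identical.
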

  
  \begin{proof}Since $F$ satisfies assumptions $(F3)$, we may choose $u_0, v_0\in C_0^{\infty}(\Omega)$ verifying $(u_0, v_0)\notin B(0, \rho)$ and $u_0, v_0\geq R>0$ in $\Omega'_0$, where $\Omega'_0$ is a domain strictly contained in $\Omega_0$ (from assumption $(F3)$) and such that for all $t$:
  $$\int_\Omega F(tu_0, tv_0)\,dx \geq c\left(|t|^{(p_1^-)^*} +  |t|^{(p_2^-)^*}\right), \quad c>0.$$Hence, for $t>0$ large and recalling assumption $(H3)$, 
  \begin{equation}\label{ineq with t}
  \Phi(tu_0,tv_0)\leq |t|^{p_1^+}\|u_0\|_{1, G_1}^{p_1^+}+ |t|^{p_2^+}\|v_0\|_{1, G_2}^{p_2^+}-c\left(|t|^{(p_1^-)^*} +  |t|^{(p_2^-)^*}\right).
  \end{equation}Since $p_i^+<(p_i^-)^*$ for $i=1, 2$, by taking $t$ large enough in \eqref{ineq with t}, we conclude the proof.
  \end{proof}

  By Theorem \ref{MPL}, there is a sequence $(u_n, v_n)\in E$ such that
  $$\Phi(u_n, v_n)\to c\in \mathbb{R}, \quad \left\langle \Phi'(u_n, v_n), (u_n, v_n)\right\rangle \to 0.$$
  
  In the next result, we will prove that the Palais-Smale sequence is bounded.
  
  \begin{lemma}\label{boundedness} The sequence $(u_n, v_n)$ is bounded in $E$.
  \end{lemma}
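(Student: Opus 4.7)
The strategy is the standard Ambrosetti--Rabinowitz-type argument, adapted to Orlicz--Sobolev modulars. From Theorem \ref{MPL} I record the two controlled quantities
\[
\Phi(u_n,v_n) = c + o(1), \qquad \langle \Phi'(u_n,v_n),(u_n,v_n)\rangle = o(\|(u_n,v_n)\|),
\]
and I fix a parameter $\theta > 0$ satisfying $\max\{p_1^+,p_2^+\} < \theta \leq \min\{c_0,c_1\}$, to be used to form the linear combination
\[
\Phi(u_n,v_n) - \tfrac{1}{\theta}\langle\Phi'(u_n,v_n),(u_n,v_n)\rangle.
\]

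For the principal parts, \eqref{G1} yields $g_i(t)t^2 \leq p_i^+ G_i(t)$, so each gradient contribution is bounded below by a positive multiple of the modular:
\[
\int_\Omega G_i(|\nabla u|)\,dx - \tfrac{1}{\theta}\int_\Omega g_i(|\nabla u|)|\nabla u|^2\,dx \;\geq\; \Bigl(1-\tfrac{p_i^+}{\theta}\Bigr)\rho_{G_i}(|\nabla u|).
\]
The lower AR inequalities in (F2) and (H2), together with $F,H \geq 0$, make the remaining reaction terms nonnegative, since they reduce to $(\tfrac{c_0}{\theta}-1)\int_\Omega F$ and $(\tfrac{c_1}{\theta}-1)\lambda\int_\Omega H$ respectively. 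Thus, for some constant $c_*>0$ depending only on $\theta$ and the exponents,
\[
c_*\bigl(\rho_{G_1}(|\nabla u_n|) + \rho_{G_2}(|\nabla v_n|)\bigr) \;\leq\; c + 1 + \varepsilon_n\|(u_n,v_n)\|
\]
for all $n$ large, with $\varepsilon_n \to 0$.

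It remains to convert this modular bound into a norm bound. I would argue by contradiction, assuming $\|(u_n,v_n)\| \to \infty$; then at least one of the component norms, say $\|u_n\|_{1,G_1}$, tends to infinity and is in particular $\geq 1$ eventually. Lemma \ref{comp norm modular} applied to $\nabla u_n$ then gives $\rho_{G_1}(|\nabla u_n|) \geq \|u_n\|_{1,G_1}^{p_1^-}$ (recall the equivalent norm $\|u\| = \|\nabla u\|_{G_1}$ on $W_0^{1,G_1}$). Plugging into the previous display yields $c_*\|u_n\|_{1,G_1}^{p_1^-} \leq C + \varepsilon_n\|(u_n,v_n)\|$, which is incompatible with $p_1^- > 1$ and divergence of $\|(u_n,v_n)\|$. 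Hence the sequence is bounded.

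The main obstacle is the calibration of the parameter $\theta$ so that every coefficient emerging from the AR combination has the correct sign; this is where the relations among $c_0, c_1$ and the exponents $p_i^{\pm}$ (via \eqref{assump c cero}, \eqref{assump c 1} and \eqref{exponents}) must be used decisively. Once $\theta$ is available, the rest is essentially bookkeeping between the modular $\rho_{G_i}$ and the Luxemburg norm $\|\cdot\|_{G_i}$ through Lemma \ref{comp norm modular}.
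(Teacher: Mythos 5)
Your argument is the same Ambrosetti--Rabinowitz scheme the paper uses: form $\Phi(u_n,v_n)-\tfrac{1}{\theta}\langle\Phi'(u_n,v_n),(u_n,v_n)\rangle$, bound it above by $C+\varepsilon_n\|(u_n,v_n)\|$, bound it below by the gradient modulars after discarding the $F$- and $H$-terms through (F2), (H2) and the nonnegativity of $F$ and $H$, and then convert modulars to norms via Lemma \ref{comp norm modular} and the fact that all exponents exceed $1$. The one substantive point to flag is the calibration of the parameter. Your choice requires $\max\{p_1^+,p_2^+\}<\theta\le\min\{c_0,c_1\}$, i.e. $c_0,c_1>\max\{p_1^+,p_2^+\}$; but \eqref{assump c cero} and \eqref{assump c 1} only assert $c_0,c_1>\max\{p_1^-,p_2^-\}$, and \eqref{exponents} compares $p_i^+$ with $(p_i^-)^*$, not with $c_0,c_1$. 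So your closing claim that the existence of $\theta$ is secured by \eqref{assump c cero}, \eqref{assump c 1} and \eqref{exponents} does not hold as stated: under those hypotheses alone such a $\theta$ need not exist (e.g. if $p_1^-<c_0<p_1^+$).

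On the other hand, your sign bookkeeping is the correct one: \eqref{G1} only gives $g_i(t)t^2\le p_i^+G_i(t)$, so the coefficient in front of $\rho_{G_i}(|\nabla u_n|)$ is $1-p_i^+/\theta$, and positivity genuinely forces $\theta>\max\{p_1^+,p_2^+\}$. The paper instead picks $\mu\in(1/c_j,1/p_i^-)$, which is available under \eqref{assump c cero}--\eqref{assump c 1}, but then writes the gradient coefficient as $(1-p_i^-\mu)$, which would require the reverse inequality $g_i(t)t^2\le p_i^- G_i(t)$; moreover the stronger relation $c_0>\max\{p_1^+,p_2^+\}$ is in any case used later in the paper's final argument, where $\tfrac{1}{\max\{p_1^+,p_2^+\}}-\tfrac{1}{c_0}>0$ is needed. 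So your proof is correct precisely under that strengthened (and apparently intended) form of \eqref{assump c cero}, \eqref{assump c 1}; you should state this as an explicit hypothesis rather than attribute it to the conditions as written. A minor further point: in the contradiction step you invoke the modular bound only for the component whose norm diverges; if the other component's norm diverges faster, the displayed inequality alone is not yet contradictory. Apply the modular--norm bound to both components and add (or argue directly, as the paper does, with $\min\{\|\cdot\|^{p^+},\|\cdot\|^{p^-}\}$ for each component), which settles the bookkeeping.
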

  \begin{proof}
  Let $\mu>0$ satisfying 
  \begin{equation}\label{chouce mu}
  \dfrac{1}{c_j}<\mu < \dfrac{1}{p_i^-}
  \end{equation}for $j=0, 1$ and $i=1, 2$. The constants $c_0$ and $c_1$ come from assumptions $(F2)$ and $(H2)$. 
  
  Firstly, observe that
  \begin{equation}\label{upper bound}
  \Phi(u_n, v_n)-\mu \left\langle \Phi'(u_n, v_n), (u_n, v_n)\right\rangle \leq C(1+\|(u_n, v_n)\|_E) = C\left[ 1+\left(\|u_n\|_{1, G_1}+\|v_n\|_{1, G_2}\right)\right],
  \end{equation}for some $C>0.$
  
  Moreover, by assumption $(H2)$ and \eqref{chouce mu},
  \begin{equation}\label{lower bound}
  \begin{split}
 & \Phi(u_n, v_n)-\mu \left\langle \Phi'(u_n, v_n), (u_n, v_n)\right\rangle \\& \quad \geq (1-p_1^{-}\mu)\int_\Omega G_1(|\nabla u_n|)\,dx+ (1-p_2^{-}\mu)\int_\Omega G_2(|\nabla v_n|)\,dx    \\& \qquad +(\mu c_0 -1)\int_{\Omega}F(u_n, v_n)\,dx  +\lambda(\mu c_1 -1)\int_{\Omega}H(x, u_n, v_n)\,dx \\& \quad \geq (1-p_1^{-}\mu)\min\left\lbrace \|u_n\|_{1, G_1}^{p_1^+},\|u_n\|_{1, G_1}^{p_1^-}  \right\rbrace + (1-p_2^{-}\mu)\min\left\lbrace \|v_n\|_{1, G_2}^{p_2^+},\|v_n\|_{1, G_2}^{p_2^-}  \right\rbrace.  
  \end{split}
  \end{equation}Since $p_i^{-}>1$, we obtain from \eqref{upper bound} and \eqref{lower bound} that the sequence $(u_n, v_n)$ is bounded in $E$.
  \end{proof}

  \subsection{Analysis of the PS-sequence via the Concentration Compactness Principle}
  
  By Lemma \ref{boundedness} and the embedding Theorem \ref{compact embedding}, there are functions $(u, v)\in E$ and measures $\mu_i$, $\nu_i$ for $i=1, 2$ such that for a subsequence,
  $$(u_n, v_n) \to (u, v) \quad \text{ in }L^{A_1}(\Omega)\times L^{A_2}(\Omega), \,A_i\ll G_i^*,$$
  $$(u_n, v_n)\to (u, v) \quad a.e. \text{ in }\Omega,$$
  $$(\nabla u_n, \nabla v_n) \rightharpoonup (\nabla u, \nabla v) \quad \text{in }L^{G_1}(\Omega)\times L^{G_2}(\Omega),$$and finally
  $$(G_1^*(u_n)dx, G^*_2(v_n)dx) \rightharpoonup (\nu_1, \nu_2) \quad \text{and}\quad (G_1(|\nabla u_n|)dx, G_2(|\nabla v_n|)dx) \rightharpoonup (\mu_1, \mu_2)$$in the sense of $wk^*$-convergence of measures. By Theorem \ref{CCP}, there are an at most countable set $J$, points $x_j^{i}$ and positive coefficients $\nu_j^{i}$ and $\mu_j^{i}$ for $i=1, 2$, such that
\begin{equation}\label{measure1}
\nu^{1}= G_1^*(u)+\sum_{j\in J}\nu_j^1\delta_{x_j^1}
\end{equation}  
  and
\begin{equation}\label{measure2}
\mu^1\geq G_1(|\nabla u|)+\sum_{j\in J}\mu_j^1\delta_{x_j^1}
\end{equation}and similarly for $v$, $\mu^2$ and $\nu^2$. 

In the next result, we will prove that $J$ is indeed finite.
\begin{proposition}
The sets $\left\lbrace x_j^i \right\rbrace_{j \in J}$ for $i=1, 2$, are finite.
\end{proposition}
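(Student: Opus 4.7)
The plan is to establish a uniform positive lower bound on the atomic masses $\mu_j^1 + \mu_j^2$ at every concentration point; since $\mu^1 + \mu^2$ is a finite Radon measure on $\Omega$ (being a weak-$*$ limit of a family bounded in the dual of $C_0$), this forces $J$ to be finite. Fix $j\in J$, set $x_0:=x_j^i$ for some $i\in\{1,2\}$, and choose a cutoff $\phi_\epsilon\in C_c^\infty(\mathbb{R}^N)$ with $0\le\phi_\epsilon\le1$, $\phi_\epsilon\equiv 1$ on $B(x_0,\epsilon)$, $\operatorname{supp}\phi_\epsilon\subset B(x_0,2\epsilon)$, and $|\nabla\phi_\epsilon|\le C/\epsilon$. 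Since $(u_n,v_n)$ is bounded in $E$ by Lemma \ref{boundedness}, so is $(u_n\phi_\epsilon,v_n\phi_\epsilon)$, and the Palais-Smale property gives
\[
\lim_n \langle \Phi'(u_n,v_n),(u_n\phi_\epsilon,v_n\phi_\epsilon)\rangle=0.
\]
Expanding the duality splits the expression into diagonal gradient terms $\int g_i(|\nabla w_n|)|\nabla w_n|^2\phi_\epsilon$, cross-gradient terms $\int g_i(|\nabla w_n|)\nabla w_n\cdot\nabla\phi_\epsilon\,w_n$, the coupled critical term $\int(F_s u_n+F_t v_n)\phi_\epsilon$, and the subcritical $H$-term.

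Each family is then controlled by first letting $n\to\infty$ with $\epsilon$ fixed, followed by $\epsilon\to0$. The lower bound $g_i(|\nabla w|)|\nabla w|^2\ge p_i^- G_i(|\nabla w|)$ from \eqref{G1} together with the weak-$*$ convergence $G_i(|\nabla w_n|)\,dx\rightharpoonup\mu^i$ yields contributions bounded below by $p_i^-\mu_j^i$. For the cross-gradients, H\"older in the complementary duality $(\tilde G_i,G_i)$ applies: Lemma \ref{G g} shows $g_i(|\nabla w_n|)\nabla w_n$ is bounded in $L^{\tilde G_i}$, while $w_n\to w$ strongly in $L^{G_i}$ by the compact embedding $G_i\ll G_i^*$ from Theorem \ref{compact embedding}; together with $\int G_i(|w\nabla\phi_\epsilon|)\to0$ as $\epsilon\to0$, these vanish. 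For the critical term, the upper bound in the Ambrosetti-Rabinowitz condition $(F2)$ combined with Lemma \ref{growth F} gives
\[
|F_s(u_n,v_n)u_n+F_t(u_n,v_n)v_n|\le c_0'F(u_n,v_n)\le C\bigl(G_1^*(u_n)+G_2^*(v_n)\bigr),
\]
so that after passing to the weak-$*$ limits $G_i^*(w_n)\,dx\rightharpoonup\nu^i$ and then letting $\epsilon\to0$, the limit is at most $C(\nu_j^1+\nu_j^2)$. The $H$-term vanishes as $\epsilon\to0$ thanks to subcritical growth $(H1)$ and the compact embedding into $L^{B_i}$. Collecting everything produces the key inequality
\[
p_1^-\mu_j^1+p_2^-\mu_j^2 \le C(\nu_j^1+\nu_j^2).
\]

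To conclude, I apply Theorem \ref{CCP} separately to $(u_n)$ and $(v_n)$, obtaining $\nu_j^i\le\max_k\{S_0^{a_{k,i}}(\mu_j^i)^{b_{k,i}}\}$ with every exponent $b_{k,i}>1$ by \eqref{exponents}. Substituting into the key inequality yields
\[
p_1^-\mu_j^1+p_2^-\mu_j^2 \le C\bigl((\mu_j^1)^{\beta_1}+(\mu_j^2)^{\beta_2}\bigr), \quad \beta_1,\beta_2>1,
\]
which, either directly or via Lemma \ref{numerical lemma}, forces $\mu_j^1+\mu_j^2\ge \delta_0$ for a universal $\delta_0>0$ at every genuine concentration point. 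The finiteness of $\mu^1(\Omega)+\mu^2(\Omega)$ then bounds $|J|$ by $(\mu^1(\Omega)+\mu^2(\Omega))/\delta_0<\infty$. The main obstacle is the coupling in the critical nonlinearity: unlike in the scalar case, a cutoff test against a single component cannot isolate the mass at $x_0$, so one is forced to use the simultaneous test $(u_n\phi_\epsilon,v_n\phi_\epsilon)$ and exploit $(F2)$ together with Lemma \ref{growth F} to reduce the coupled critical term to the scalar measures $\nu^1+\nu^2$. A secondary technical hurdle is that the Orlicz framework requires every H\"older-type estimate to be carried out in the complementary $N$-function duality rather than the usual $L^p/L^{p'}$ duality.
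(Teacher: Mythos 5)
Your argument is correct and is essentially the paper's own proof: you test the Palais--Smale condition with $(u_n\phi_\varepsilon,v_n\phi_\varepsilon)$, bound the diagonal gradient terms from below by $p_i^-\int\phi_\varepsilon\,d\mu^i$ via \eqref{G1}, bound the critical and subcritical terms from above through $(F1)$--$(F2)$, Lemma \ref{growth F} and \eqref{growth H}, and then invoke the reverse H\"older relation $\nu_j\le C\mu_j^{\beta}$ with $\beta>1$ (Theorem \ref{CCP} together with \eqref{exponents}) to obtain a uniform positive lower bound on the atomic masses, so finiteness follows from $\mu^1(\Omega)+\mu^2(\Omega)<\infty$. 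The only departures are in execution: you dispose of the cross terms through the direct estimate $\|u\nabla\phi_\varepsilon\|_{G_i}\to 0$ as $\varepsilon\to 0$ (which is true, but needs the Fukagai--Ito--Narukawa type H\"older computation built on the Sobolev conjugate $G_i^*$, since the pointwise bound \eqref{G product} alone does not suffice), whereas the paper instead tests with $(u\phi_\varepsilon,v\phi_\varepsilon)$ and uses dominated convergence, and your single inequality $p_1^-\mu_j^1+p_2^-\mu_j^2\le C\big((\mu_j^1)^{\beta_1}+(\mu_j^2)^{\beta_2}\big)$ treats coincident and non-coincident concentration points simultaneously, replacing (a bit more cleanly) the paper's two-case discussion of whether $x_j^1$ equals some $x_i^2$.
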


\begin{proof}
 Since the argument is symmetric, we will show that the set $\left\lbrace x_j^1 \right\rbrace$ is finite. Let us take $x_j^1$ fixed. Consider a smooth function $\psi$ such that $0 \leq \psi \leq 1$,
$$\psi(x)=\begin{cases}
1, \quad |x|\leq 1\\
0, \quad |x|>2
\end{cases}.$$For $\varepsilon>0$, define
$$\phi_\varepsilon(x):= \psi\left(\dfrac{x-x_j^1}{\varepsilon} \right).$$Since
$$ \Phi'(u_n, v_n) \to 0,$$testing with $(\phi_\varepsilon u_n,\phi_\varepsilon v_n)$, we obtain
\begin{equation}\label{ineq1}
\begin{split}
&\int_\Omega g_1(|\nabla u_n|)\nabla u_n \nabla(\phi_\varepsilon u_n)\,dx +\int_\Omega g_2(|\nabla v_n|)\nabla v_n \nabla(\phi_\varepsilon v_n)\,dx \\ & \qquad = \int_\Omega F_s (u_n, v_n)u_n \phi_\varepsilon\,dx +\int_\Omega F_t (u_n, v_n)v_n \phi_\varepsilon\,dx\\&\qquad + \int_\Omega H_s (x, u_n, v_n)u_n \phi_\varepsilon\,dx+\int_\Omega H_t (x, u_n, v_n)v_n \phi_\varepsilon\,dx + o_n(1)\\&\qquad \leq C\left(\int_\Omega G_1^*(u_n)\phi_\varepsilon\,dx +\int_\Omega G_2^*(v_n)\phi_\varepsilon\,dx\right)\\& \qquad +C\left(\int_\Omega B_1(u_n)\phi_\varepsilon\,dx +\int_\Omega B_2(v_n)\phi_\varepsilon\,dx\right) + o_n(1). 
\end{split}
\end{equation}
Now, 
\begin{equation}\label{ineq2}
\begin{split}
\int_\Omega g_1(|\nabla u_n|)\nabla u_n \nabla(\phi_\varepsilon u_n)\,dx &= \int_\Omega g_1(|\nabla u_n|)\nabla u_n \nabla\phi_\varepsilon u_n\,dx + \int_\Omega g_1(|\nabla u_n|)|\nabla u_n|^2 \phi_\varepsilon\,dx \\ & \geq \int_\Omega g_1(|\nabla u_n|)\nabla u_n \nabla\phi_\varepsilon u_n\,dx + p_1^{-}\int_\Omega \phi_\varepsilon G_1(|\nabla u_n|)\,dx.
\end{split}
\end{equation}By Lemma \ref{G g} we have
$$\tilde{G_1}(g_1(|\nabla u_n|)|\nabla u_n|) \leq CG_1(|\nabla u_n|)$$and so the sequence $g_1(|\nabla u_n|)|\nabla u_n|$ is uniformly bounded in $L^{\tilde{G_1}}(\Omega)$. Thus, there is $w_1\in L^{\tilde{G_1}}(\Omega)$ such that
$$ g_1(|\nabla u_n|)\nabla u_n \rightharpoonup w_1.$$Hence, we have
\begin{equation}\label{weak1}
\int_\Omega g_1(|\nabla u_n|)\nabla u_n \nabla\phi_\varepsilon u_n\,dx \to \int_\Omega w_1 \nabla\phi_\varepsilon u\,dx.
\end{equation}This follows by adding and subtracting the term
$$g_1(|\nabla u_n|)\nabla u_n \nabla \phi_\varepsilon u$$in the difference
$$\int_\Omega  g_1(|\nabla u_n|)\nabla u_n \nabla\phi_\varepsilon u_n\,dx - \int_\Omega w_1 \nabla\phi_\varepsilon u\,dx$$and recalling the strong convergence of $u_n$ to $u$ in the support of $\phi_\varepsilon$.  

Similarly, 
\begin{equation}\label{weak2}
\int_\Omega g_2(|\nabla v_n|)\nabla v_n \nabla\phi_\varepsilon v_n\,dx \to \int_\Omega w_2 \nabla\phi_\varepsilon v\,dx,
\end{equation}for some $w_2\in L^{\tilde{G_2}}(\Omega)$. Taking $n\to \infty$ and recalling \eqref{ineq1}, \eqref{ineq2}, the convergences \eqref{weak1} and \eqref{weak2} together with \eqref{measure1} and \eqref{measure2}, we obtain
\begin{equation}\label{eq0}
\int_\Omega w_1 \nabla \phi_\varepsilon u\,dx +\int_\Omega w_2 \nabla \phi_\varepsilon v\,dx + p_1^{-}\int_\Omega \phi_\varepsilon\,d \mu^1 +  p_2^{-}\int_\Omega \phi_\varepsilon\,d \mu^2 \leq C\left(\int_\Omega \phi_\varepsilon \,d\nu^1 + \int_\Omega \phi_\varepsilon \,d\nu^2\right).
\end{equation}
We next revise the convergence of the above terms as $\varepsilon\to 0^+$. Using $(u\phi_\varepsilon, v\phi_\varepsilon)$ as test functions in $\Phi'(u_n, v_n)\to 0$, we derive
\begin{equation}\label{eq3}
\begin{split}
&\int_\Omega g_1(|\nabla u_n|)\nabla u_n \nabla (u\phi_\varepsilon)\,dx +\int_\Omega g_2(|\nabla v_n|)\nabla v_n \nabla (v\phi_\varepsilon)\,dx -\int_\Omega F_s(u_n, v_n)u \phi_\varepsilon\,dx \\ & \qquad- \int_\Omega F_t(u_n, v_n)v \phi_\varepsilon\,dx -\int_\Omega H_s(x, u_n, v_n)u \phi_\varepsilon\,dx - \int_\Omega H_t(x, u_n, v_n)v \phi_\varepsilon\,dx = o_n(1).
\end{split}
\end{equation}By assumption $(F1)$, the sequences $F_s(u_n, v_n)$ and $F_t(u_n, v_n)$ are bounded in $L^{\tilde{G_1^*}}(\Omega)$ and $L^{\tilde{G_1^*}}(\Omega)$, respectively. Hence, there are functions $b_1 \in L^{\tilde{G_1^*}}(\Omega)$ and $b_2 \in L^{\tilde{G_2^*}}(\Omega)$ such that
$$F_s(u_n, v_n) \rightharpoonup b_1 \quad \text{and }F_t(u_n, v_n) \rightharpoonup b_2.$$Similarly, there are $h_1\in L^{\tilde{B_1}}(\Omega)$ and $h_2\in L^{\tilde{B_2}}(\Omega)$ such that
$$H_s(x, u_n, v_n) \rightharpoonup h_1 \quad \text{and }H_t(x, u_n, v_n) \rightharpoonup h_2.$$

Thus, when $n\to \infty$ in \eqref{eq3}, we get
\begin{equation}\label{eq4}
\begin{split}
&\int_\Omega w_1 \nabla \phi_\varepsilon u\,dx +\int_\Omega w_2 \nabla \phi_\varepsilon v\,dx= \int_\Omega b_1 u \phi_\varepsilon\,dx + \int_\Omega b_2 v \phi_\varepsilon\,dx-\int_\Omega w_1\nabla u \phi_\varepsilon\,dx \\ & \qquad  -\int_\Omega w_2\nabla v \phi_\varepsilon\,dx+ \int_\Omega h_1 u \phi_\varepsilon\,dx + \int_\Omega h_2 v \phi_\varepsilon\,dx.
\end{split}
\end{equation}By dominated convergence theorem, the right-hand side of \eqref{eq4} converges to $0$ as $\varepsilon\to 0^+$. 

Assume first that $x_j^1 \notin \left\lbrace x_i^2\right\rbrace_{i\in J}$.
Letting $\varepsilon\to 0^+$ in \eqref{eq0}, we obtain
\begin{equation}
0<p_1^{-}\mu_j^1 \leq C \nu_j^1.
\end{equation}Hence, by Theorem \ref{CCP},
$$p_1^{-}\mu_j^1 \leq C \nu_j^1 \leq C(\mu_j^1)^\alpha,$$for some $\alpha>1$. Therefore, 
\begin{equation}\label{bound mu}
\mu_j^1>c
\end{equation}for  some constant $c>0$ independent of $j$.

If now $x^1_j=x_i^2$ for some $i$. Then, we obtain 
$$0<p_1^{-}\mu_j^1+p_2^{-}\mu_i^2 \leq C \nu_j^1+C\nu_i^2.$$We may take $\mu_i^2<<1$. Then, by Theorem \ref{CCP},
 $$0<p_1^{-}\mu_j^1+p_2^{-}\mu_i^2 \leq C \nu_j^1+C\nu_i^2\leq C (\mu_j^1)^\alpha+C (\mu_i^2)^\beta$$for some $\alpha, \beta >1$. Thus,
 $$p_1^{-}\mu_j^1 \leq C (\mu_j^1)^\alpha+C (\mu_i^2)^\beta- p_2^{-}\mu_i^2 \leq  C (\mu_j^1)^\alpha$$and we arrive at the same conclusion as before.  In this way, if $x_j^1 \notin \left\lbrace x_i^2\right\rbrace_{i\in J}$ for infinite $j's$, since 
\begin{equation}\label{series}
\sum_j \mu_j^1 <\infty,
\end{equation}we get from \eqref{bound mu} a contradiction. Then, this implies that  $x_j^1 \in \left\lbrace x_i^2\right\rbrace_{i\in J}$ holds for infinite $j's$. But the same reasoning gives a contradiction  with \eqref{series}. Therefore, the set $J$ must be finite. 
\end{proof}

\begin{lemma}\label{strong convergence}Let $K\subset \Omega\setminus \left(\left\lbrace x_j^1\right\rbrace \cup \left\lbrace x_j^2\right\rbrace \right)$ be compact. Then,
\begin{equation}\label{eq8}
u_n\to u \quad \text{and }\quad v_n\to v
\end{equation}in $L^{G_1^*}(K)$ and $L^{G_2^*}(K)$ respectively, as $n\to \infty$. 
\end{lemma}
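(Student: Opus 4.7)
The plan is to localize around $K$ with a cutoff that avoids the concentration points, use the atom-free structure of $\nu^1$ and $\nu^2$ on the support of the cutoff to obtain convergence of the $G_i^*$-modulars tested against the cutoff, and then upgrade this to strong $L^{G_i^*}$ convergence on $K$ by a Brezis--Lieb argument.

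First I would observe that the preceding proposition makes $\{x_j^1\}$ and $\{x_j^2\}$ finite, and $K$ is compact and disjoint from both, so one can fix $\phi \in C_c^\infty(\Omega)$ with $0 \le \phi \le 1$, $\phi \equiv 1$ on $K$, and $\operatorname{supp}(\phi) \cap (\{x_j^1\}\cup\{x_j^2\}) = \emptyset$. Testing the weak$^*$ convergence $G_1^*(u_n)\,dx \rightharpoonup \nu^1$ against the continuous compactly supported $\phi$ and using the representation \eqref{measure1} together with $\phi(x_j^1)=0$ for every $j$ yields
$$\int_\Omega \phi\, G_1^*(u_n)\,dx \;\longrightarrow\; \int_\Omega \phi\,d\nu^1 = \int_\Omega \phi\, G_1^*(u)\,dx,$$
and the analogous convergence holds for $v_n$ with $G_2^*$ and $\nu^2$.

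Next I would invoke the Brezis--Lieb lemma in the Orlicz setting. Since $G_1^*$ satisfies the $\Delta_2$ condition (inherited from \eqref{G1} via the construction of $G_1^*$), $u_n \to u$ a.e.\ in $\Omega$, and $\sup_n \int_\Omega G_1^*(u_n)\,dx < \infty$ by Lemma \ref{boundedness} together with Theorem \ref{compact embedding}, one has
$$\int_\Omega \phi\bigl(G_1^*(u_n) - G_1^*(u_n - u) - G_1^*(u)\bigr)\,dx \;\longrightarrow\; 0.$$
Subtracting from the previous limit and using $\phi \equiv 1$ on $K$ leaves $\int_K G_1^*(u_n - u)\,dx \to 0$, and Lemma \ref{comp norm modular} converts this modular convergence into $\|u_n - u\|_{L^{G_1^*}(K)} \to 0$. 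Running the same argument with $v_n$, $G_2^*$ and the same cutoff (which also avoids $\{x_j^2\}$) gives $v_n \to v$ in $L^{G_2^*}(K)$.

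The main obstacle is the Orlicz version of the Brezis--Lieb identity. Its standard proof adapts directly: the $\Delta_2$ property of $G_1^*$ yields the pointwise bound
$$\bigl|G_1^*(a+b) - G_1^*(a) - G_1^*(b)\bigr| \le \varepsilon\, G_1^*(a) + C_\varepsilon\, G_1^*(b) \quad \text{for every } \varepsilon > 0,$$
and choosing $a = u_n - u$, $b = u$ dominates $\phi\,|G_1^*(u_n) - G_1^*(u_n - u) - G_1^*(u)|$ by $C\varepsilon(G_1^*(u_n) + G_1^*(u)) + C_\varepsilon G_1^*(u)$; combining a.e.\ convergence with the uniform bound on $\rho_{G_1^*}(u_n)$ and letting $\varepsilon \to 0^+$ after passing to the limit produces the claim by a standard Fatou/Vitali argument.
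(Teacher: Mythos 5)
Your proposal is correct and follows essentially the same route as the paper: both exploit the weak$^*$ convergence $G_i^*(u_n)\,dx \rightharpoonup \nu^i$ tested against a cutoff whose support avoids the (finitely many) concentration points, so that only the absolutely continuous part $G_i^*(u)\,dx$ survives, and then conclude by a Brezis--Lieb argument. The only difference is organizational: the paper works with shrinking neighborhoods $A_\varepsilon$ of $K$ and a $\limsup$/Fatou sandwich to get convergence of the modulars over $K$ before invoking Brezis--Lieb, whereas you fix one cutoff $\phi\equiv 1$ on $K$ and apply the weighted Brezis--Lieb splitting identity directly, then pass from modular to norm convergence via $\Delta_2$ -- both steps are legitimate in this framework.
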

\begin{proof}
Let $d:=\text{dist}(K, \left\lbrace x_j^1\right\rbrace \cup \left\lbrace x_j^2\right\rbrace )>0$. Fix $R>0$ so that $K\subset B_R(0)$ and for $\varepsilon>0$ consider the set
$$A_\varepsilon:=\left\lbrace x\in B_R(0): \text{dist}(x, K)<\varepsilon\right\rbrace.$$Take $\varepsilon<d$. Now, consider a smooth function $\phi \in C_0^\infty(\mathbb{R}^N)$ such that $0\leq \phi \leq 1$ and
$$\phi(x)=\begin{cases} 1, \quad x\in A_{\varepsilon/2}\\0, \quad x \in \Omega\setminus A_\varepsilon.\end{cases}.$$Observe that
$$K\subset A_{\varepsilon/2}\subset \Omega\setminus \left(\left\lbrace x_j^1\right\rbrace \cup \left\lbrace x_j^2\right\rbrace \right) \cap B_R(0),$$implies
$$\int_K G_1^*(u_n)\,dx \leq \int_{A_\varepsilon}\phi G_1^*(u_n)\,dx = \int_{\Omega}\phi G_1^*(u_n)\,dx.$$Thus, by Theorem \ref{CCP},
$$\limsup_{n\to \infty}\int_K G_1^*(u_n)\,dx \leq \int_{\Omega}\phi \,d\nu^1= \int_{\Omega}\phi G_1^*(u)\,dx =  \int_{A_\varepsilon}\phi G_1^*(u)\,dx.$$As $\varepsilon\to 0^+$, it follows that
$$\limsup_{n\to \infty}\int_K G_1^*(u_n)\,dx \leq \int_{K} G_1^*(u)\,dx$$which together with Fatou's Lemma give
$$\lim_{n\to \infty}\int_K G_1^*(u_n)\,dx = \int_{K} G_1^*(u)\,dx.$$Finally, by the Brezis-Lieb Lemma (\cite[Lemma 3]{BL}), we derive \eqref{eq8}. 
\end{proof}

\begin{lemma}\label{conv gradients}Let $K\subset \Omega\setminus \left(\left\lbrace x_j^1\right\rbrace \cup \left\lbrace x_j^2\right\rbrace \right)$ be compact. Then,
\begin{equation}\label{eq6}
\int_K \left(g_1(|\nabla u_n|)\nabla u_n-g_1(|\nabla u|)\nabla u\right)(\nabla u_n-\nabla u)\,dx\to 0 
\end{equation}and
\begin{equation}\label{eq7}
\int_K \left(g_2(|\nabla v_n|)\nabla v_n-g_2(|\nabla v|)\nabla v\right)(\nabla v_n-\nabla v)\,dx\to 0 
\end{equation}as $n\to \infty$. 
\end{lemma}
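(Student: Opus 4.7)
The plan is to localise away from the concentration points by a cutoff and then exploit the strict monotonicity of the vector field $\xi\mapsto g_i(|\xi|)\xi$. Fix a cutoff $\phi\in C_0^\infty(\Omega)$ with $0\le\phi\le 1$, $\phi\equiv 1$ on $K$, and $\operatorname{supp}\phi\subset \Omega\setminus(\{x_j^1\}\cup\{x_j^2\})$. I will only treat \eqref{eq6}; the argument for \eqref{eq7} is identical by symmetry.

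First, I would test the Palais--Smale relation against $(\phi(u_n-u),0)$ and use $\langle\Phi'(u_n,v_n),(\phi(u_n-u),0)\rangle=o_n(1)$. Expanding the derivative of $\Phi$, this gives
\begin{equation*}
\int_\Omega \phi\, g_1(|\nabla u_n|)\nabla u_n\cdot(\nabla u_n-\nabla u)\,dx + \int_\Omega (u_n-u)\,g_1(|\nabla u_n|)\nabla u_n\cdot\nabla\phi\,dx = \int_\Omega F_s(u_n,v_n)\phi(u_n-u)\,dx + \lambda \int_\Omega H_s(x,u_n,v_n)\phi(u_n-u)\,dx + o_n(1).
\end{equation*}
I then want each of the right-hand integrals, together with the second integral on the left, to vanish in the limit $n\to\infty$. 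For the $\nabla\phi$-term, Lemma~\ref{G g} bounds $g_1(|\nabla u_n|)\nabla u_n$ in $L^{\widetilde{G_1}}(\Omega)$; combined with $u_n\to u$ in $L^{G_1}(\operatorname{supp}\phi)$ from the compact embedding in Theorem~\ref{compact embedding}, Hölder's inequality in Orlicz spaces closes this piece. For the $F_s$-term, assumption $(F1)$ and Young's inequality \eqref{2.5} place $F_s(u_n,v_n)$ in $L^{\widetilde{G_1^*}}(\Omega)$ uniformly (using Lemma~\ref{G g} applied to $G_1^*$ for the first summand and $\widetilde{G_1^*}\circ\widetilde{G_1^*}^{-1}=\mathrm{id}$ for the second); pairing against $\phi(u_n-u)\to 0$ in $L^{G_1^*}(\operatorname{supp}\phi)$ from Lemma~\ref{strong convergence} gives the desired decay. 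The $H_s$-term is easier: $(H1)$ together with the compact embedding $W_0^{1,G_1}(\Omega)\hookrightarrow L^{B_1}(\Omega)$ (since $B_1\ll G_1^*$) kills it.

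Next, I would subtract from the remaining term the quantity $\int_\Omega \phi\, g_1(|\nabla u|)\nabla u\cdot(\nabla u_n-\nabla u)\,dx$, which vanishes by the weak convergence $\nabla u_n\rightharpoonup \nabla u$ in $L^{G_1}(\Omega)$ tested against the fixed element $\phi\, g_1(|\nabla u|)\nabla u\in L^{\widetilde{G_1}}(\Omega)$. Combining everything yields
\begin{equation*}
\int_\Omega \phi\,\bigl(g_1(|\nabla u_n|)\nabla u_n-g_1(|\nabla u|)\nabla u\bigr)\cdot(\nabla u_n-\nabla u)\,dx \to 0.
\end{equation*}
Since $\xi\mapsto g_1(|\xi|)\xi$ is the gradient of the convex function $\xi\mapsto G_1(|\xi|)$, the integrand is pointwise nonnegative; restricting to $K$ where $\phi\equiv 1$ immediately delivers \eqref{eq6}.

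The main obstacle I expect is the $F_s$-integral: one has to track carefully that the critical growth bound $(G_1^*)'(|u_n|)+\widetilde{G_1^*}^{-1}(G_2^*(|v_n|))$ really lies in $L^{\widetilde{G_1^*}}(\Omega)$ uniformly in $n$, and this uniformity paired merely with $L^{G_1^*}$-weak convergence of $u_n-u$ on all of $\Omega$ would be insufficient. It is precisely the improved \emph{strong} convergence $u_n\to u$ in $L^{G_1^*}$ on $\operatorname{supp}\phi$ from Lemma~\ref{strong convergence}, available only after the finitely many concentration points of both $\nu^1$ and $\nu^2$ have been excised by the cutoff $\phi$, that makes the critical term collapse.
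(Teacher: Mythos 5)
Your proposal is correct and follows essentially the same route as the paper: localize with a cutoff supported away from the concentration points, test the Palais--Smale condition against the cutoff times $(u_n-u)$ (resp. $(v_n-v)$), kill the $\nabla\phi$-, $F$- and $H$-terms using the uniform $L^{\widetilde{G_1^*}}$-bounds from $(F1)$, $(H1)$ and the local strong convergence of Lemma~\ref{strong convergence}, handle $\int\phi\,g_1(|\nabla u|)\nabla u\cdot(\nabla u_n-\nabla u)\,dx$ by weak convergence of gradients, and conclude via monotonicity of $\xi\mapsto g_1(|\xi|)\xi$. The only (inessential) difference is that you test componentwise with $(\phi(u_n-u),0)$ and $(0,\phi(v_n-v))$, whereas the paper tests jointly with $((u_n-u)\phi,(v_n-v)\phi)$ and separates the two limits at the end using the pointwise nonnegativity of each integrand.
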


\begin{proof}

Take a smooth function $\phi \in C_0^{\infty}(\Omega)$ such that $0\leq \phi\leq 1$, $\phi = 1$ in $K$ and 
$$supp(\phi)\cap \left(\left\lbrace x_j^1\right\rbrace \cup \left\lbrace x_j^2\right\rbrace \right) = \emptyset.$$Then,
\begin{equation}\label{eq13}
\begin{split}
0 &\leq \int_K \left(g_1(|\nabla u_n|)\nabla u_n -g_1(|\nabla u|)\nabla u\right) (\nabla u_n-\nabla u)\,dx \\& \qquad  +\int_K \left(g_2(|\nabla v_n|)\nabla v_n -g_2(|\nabla v|)\nabla v\right) (\nabla v_n-\nabla v)\,dx  \\& \qquad \leq  \int_{\Omega} \left(g_1(|\nabla u_n|)\nabla u_n -g_1(|\nabla u|)\nabla u\right) (\nabla u_n-\nabla u)\phi\,dx \\& \qquad +  \int_{\Omega} \left(g_2(|\nabla v_n|)\nabla v_n -g_2(|\nabla v|)\nabla v\right) (\nabla v_n-\nabla v)\phi\,dx
\end{split}
\end{equation}The sequences $(u_n-u)\phi$ and $(v_n-v)\phi$ are bounded in $W^{1, G_1}(\Omega)$ and $W^{1, G_2}(\Omega)$, respectively, so
\begin{equation}\label{eq10}
\left\langle \Phi'(u_n, v_n), \left( (u_n-u)\phi,(v_n-v)\phi\right)\right\rangle \to 0,\end{equation}as $n\to \infty$.  Now,
\begin{equation}\label{eq11}
\begin{split}
\left\langle \Phi'(u_n, v_n), \left( (u_n-u)\phi,(v_n-v)\phi\right)\right\rangle & = \int_{\Omega}g_1(|\nabla u_n|)\nabla u_n \nabla ((u_n-u)\phi)\,dx\\ & \qquad  + \int_{\Omega}g_2(|\nabla v_n|)\nabla v_n \nabla  ((v_n-v)\phi)\,dx \\ & -\int_{\Omega}F_s(u_n, v_n) (u_n-u)\phi\,dx -\int_{\Omega}F_t(u_n, v_n) (v_n-v)\phi\,dx\\ & -\int_{\Omega}H_s(x, u_n, v_n) (u_n-u)\phi\,dx -\int_{\Omega}H_t(x, u_n, v_n) (v_n-v)\phi\,dx. 
\end{split}
\end{equation}Recall that by assumption $(F1)$, $F_s(u_n, v_n)$ and $F_t(u_n, v_n)$ are bounded in $L^{\tilde{G_1^*}}(\Omega)$ and $L^{\tilde{G_2^*}}(\Omega)$, respectively. Hence, by H\"{o}lder's inequality and Lemma \ref{strong convergence} applied in the support of $\phi$, 
\begin{equation}\label{eq20}
\bigg|\int_{\Omega}F_s(u_n, v_n) (u_n-u)\phi\,dx \bigg|, \bigg|\int_{\Omega}F_t(u_n, v_n) (v_n-v)\phi\,dx \bigg| \to 0,
\end{equation}as $n\to \infty$.
Similarly,
\begin{equation}\label{eq201}
\bigg|\int_{\Omega}H_s(x, u_n, v_n) (u_n-u)\phi\,dx \bigg|, \bigg|\int_{\Omega}H_t(x, u_n, v_n) (v_n-v)\phi\,dx \bigg| \to 0,
\end{equation}as $n\to \infty$.

 Hence, \eqref{eq10}, \eqref{eq11}, \eqref{eq20} and  \eqref{eq201} give
\begin{equation}\label{eq21}
\int_{\mathbb{R}^N}g_1(|\nabla u_n|)\nabla u_n \nabla ((u_n-u)\phi)\,dx + \int_{\mathbb{R}^N}g_2(|\nabla v_n|)\nabla v_n \nabla  ((v_n-v)\phi)\,dx \to 0,
\end{equation}as $n\to \infty$. Now, by H\"{o}lder's inequality and Lemma \ref{strong convergence} applied in the support of $\phi$ again, we get
\begin{equation}\label{eq12}
\int_{\Omega}g_1(|\nabla u_n|)\nabla u_n \nabla \phi(u_n-u)\,dx + \int_{\Omega}g_2(|\nabla v_n|)\nabla v_n \nabla \phi (v_n-v)\,dx \to 0,
\end{equation}as $n\to \infty$. Finally, by the weak convergences $(\nabla u_n, \nabla v_n)\rightharpoonup (\nabla u, \nabla v)$ in $L^{G_1}(\Omega)\times L^{G_2}(\Omega)$, we derive
\begin{equation}\label{eq14}
\int_{\Omega}g_1(|\nabla u|)\nabla u(\nabla u_n-\nabla u)\phi\,dx + \int_{\Omega}g_2(|\nabla v|)\nabla v(\nabla v_n-\nabla v)\phi\,dx\to 0,
\end{equation}as $n\to \infty$. Finally, combining \eqref{eq21}, \eqref{eq12}, \eqref{eq14} with \eqref{eq13} give \eqref{eq6} and \eqref{eq7}. 

\end{proof}

To obtain the almost everywhere convergence of the gradients, we appeal to the following weak version of  \cite[Lemma 7.1]{O} which may obtained without the condition that $t \to G(\sqrt{t})$ is convex: there exists a constant $C>0$ such that for all $a, b \in \mathbb{R}^{N}$, $a\neq b$, we have
$$\left\langle \dfrac{g(|a|)}{|a|}a-\dfrac{g(|b|)}{|b|}b, a-b \right\rangle >0 .$$
Applying this lemma with $a=\nabla u_n, \nabla v_n$ and $b=\nabla u, \nabla v$ together with Lemma \ref{conv gradients} and \cite[Lemma 6]{DM}, give the desire a.e. convergence.

\subsection{Final argument} We first show the existence of a weak solution to the system \eqref{main system}. Observe that the sequences
$$g_1(|\nabla u_n|)\nabla u_n \quad \text{and }\quad g_2(|\nabla v_n|)\nabla v_n$$are bounded in $L^{\tilde{G_1}}(\Omega)$ and $L^{\tilde{G_2}}(\Omega)$, respectively, and converge a.e. to
 $$g_1(|\nabla u|)\nabla u \quad \text{and }\quad g_2(|\nabla v|)\nabla v,$$respectively. Then,
 $$g_1(|\nabla u_n|)\nabla u_n \rightharpoonup g_1(|\nabla u|)\nabla u$$and
  $$g_2(|\nabla v_n|)\nabla v_n \rightharpoonup g_2(|\nabla v|)\nabla v.$$Similarly, the sequences
  $$F_s(u_n, v_n) \quad \text{and }\quad F_t(u_n, v_n)$$are bounded in $L^{\tilde{G^*_1}}(\Omega)$ and $L^{\tilde{G^*_2}}(\Omega)$, respectively, and converge a. e. to
  $$F_s(u, v) \quad \text{and }\quad F_t(u, v).$$Hence,
  $$F_s(u_n, v_n)\rightharpoonup F_s(u, v) \quad \text{and }\quad F_t(u_n, v_n)\rightharpoonup F_t(u, v).$$In a similar way, there holds
  $$H_s(x, u_n, v_n)\rightharpoonup H_s(x, u, v) \quad \text{and }\quad H_t(x, u_n, v_n)\rightharpoonup H_t(x, u, v).$$
  
  Therefore, from 
  $$\left\langle \Phi'(u_n, v_n), (\phi, \varphi)\right\rangle=o_n(1),\quad \phi, \varphi\in C_0^\infty(\Omega),$$and the above comments, there holds that the pair $(u, v)$ satisfies
  $$\left\langle \Phi'(u, v), (\phi, \varphi)\right\rangle=0$$and so it solves \eqref{main system}.

  Finally, we show that $(u, v)\neq (0, 0)$. We will argue by contradiction. So assume that $(u, v)=(0, 0)$.
  
  By \eqref{ineq with t}, we have for $u_0, v_0\in C_0^{\infty}(\Omega_0)$, $u_0, v_0 \geq R>0$ in $\Omega'_0$ a subdomain strictly contained in $\Omega_0$ (where $\Omega_0$ comes from assumption $(H3)$), that
  $$0 < c_\lambda \leq \max_{t\geq 0}\Phi_\lambda(tu_0, tv_0)= \Phi_\lambda(t_\lambda u_0, t_\lambda v_0),$$ where $t_\lambda\in (0, T_0)$ for $T_0>0$ independent of $\lambda$. We next show that $t_\lambda \to 0 $ as $\lambda\to \infty$. By contradiction, suppose there are a sequence $\lambda_j\to \infty$ and $\delta>0$ such that
  $$\delta\leq t_{\lambda_j}\leq T_0.$$Observe that assumption $(H3)$ implies that
  $$c=\min\left\lbrace H(x, tu_0(x), tv_0(x)): x\in \Omega'_0, \,\delta  \leq t \leq T_0  \right\rbrace>0.$$
Hence, by \eqref{ineq with t}, there is a bounded term $C(t_{\lambda_j})$ such that
$$0< c_{\lambda_j}\leq \max \Phi_{\lambda_j}(tu_0, tv_0)=\Phi_{\lambda_j}(t_{\lambda_j}u_0, t_{\lambda_j}v_0)  \leq C(t_{\lambda_j}) - c \lambda_j|\Omega'_0| \to -\infty,$$as $j\to \infty$, which is a contradiction. 

So far, we have obtained that as $\lambda\to \infty$,
\begin{equation}\label{c lamda}
c_\lambda \to 0^+.
\end{equation}
Since $(u,v)=(0, 0)$, we get
\begin{equation}\label{eq19}
\bigg|\int_\Omega H_s(x, u_n, v_n)u_n+H_t(x, u_n, v_n)v_n \,dx \bigg|\leq C\left(\|u_n\|_{B_1}\|H_s(x, u_n, v_n)\|_{\tilde{B_1}} +\|v_n\|_{B_2}\|H_t(x, u_n, v_n)\|_{\tilde{B_2}}  \right) \to 0,
\end{equation}as $n\to \infty$. Hence, from
$$\left\langle \Phi'_\lambda(u_n, v_n), (u_n, v_n)\right\rangle \to 0,$$as $n\to \infty$, we get by \eqref{eq19} that for a subsequence if necessary,
$$\lim_{n\to \infty}\int_\Omega g_1(|\nabla u_n|)|\nabla u_n|^2 +  g_2(|\nabla v_n|)|\nabla v_n|^2\,dx = \lim_{n\to \infty}\int_\Omega F_s(u_n, v_n)u_n + F_t(u_n, v_n)v_n\,dx = k.$$

Observe that $k>0$ since
$$0<c_\lambda = \lim_{n\to \infty}\Phi_\lambda(u_n, v_n) \leq \dfrac{k}{\min\left\lbrace p_1^{-},p_2^{-}\right\rbrace},$$where we have used that $F \geq 0$, \eqref{eq19},  and \eqref{G1}. 

Next,  we will obtain a uniform lower bound for $k$. Observe that by Lemma \ref{comp norm modular}
\begin{equation*}
\begin{split}
\|\nabla u_n\|_{G_1} + \|\nabla v_n\|_{G_2} & \leq \max \left\lbrace \left(\int_\Omega G_1(|\nabla u_n|)\,dx\right)^{1/p_1^+},\left(\int_\Omega G_1(|\nabla u_n|)\,dx\right)^{1/p_1^-}  \right\rbrace \\ & + \max \left\lbrace \left(\int_\Omega G_2(|\nabla v_n|)\,dx\right)^{1/p_2^+},\left(\int_\Omega G_2(|\nabla v_n|)\,dx\right)^{1/p_2^-}  \right\rbrace \\ & \leq \max \left\lbrace \left(\dfrac{1}{p_1^{-}}\int_\Omega g_1(|\nabla u_n|)|\nabla u_n|^2\,dx\right)^{1/p_1^+},\left(\dfrac{1}{p_1^{-}}\int_\Omega g_1(|\nabla u_n|)|\nabla u_n|^2\,dx\right)^{1/p_1^-}  \right\rbrace\\& + \max \left\lbrace \left(\dfrac{1}{p_2^{-}}\int_\Omega g_2(|\nabla v_n|)|\nabla v_n|^2\,dx\right)^{1/p_2^+},\left(\dfrac{1}{p_2^{-}}\int_\Omega g_2(|\nabla v_n|)|\nabla v_n|^2\,dx\right)^{1/p_2^-}  \right\rbrace \\& \leq 
\max \bigg\{ \left(\dfrac{1}{p_1^{-}}\int_\Omega g_1(|\nabla u_n|)|\nabla u_n|^2\,dx+\dfrac{1}{p_2^{-}}\int_\Omega g_2(|\nabla v_n|)|\nabla v_n|^2\,dx\right)^{1/p_1^+},\\& \left(\dfrac{1}{p_1^{-}}\int_\Omega g_1(|\nabla u_n|)|\nabla u_n|^2\,dx+\dfrac{1}{p_2^{-}}\int_\Omega g_2(|\nabla v_n|)|\nabla v_n|^2\,dx\right)^{1/p_1^-}  \bigg\}\\& +\max \bigg\{ \left(\dfrac{1}{p_1^{-}}\int_\Omega g_1(|\nabla u_n|)|\nabla u_n|^2\,dx+\dfrac{1}{p_2^{-}}\int_\Omega g_2(|\nabla v_n|)|\nabla v_n|^2\,dx\right)^{1/p_2^+},\\& \left(\dfrac{1}{p_1^{-}}\int_\Omega g_1(|\nabla u_n|)|\nabla u_n|^2\,dx+\dfrac{1}{p_2^{-}}\int_\Omega g_2(|\nabla v_n|)|\nabla v_n|^2\,dx\right)^{1/p_2^-}  \bigg\}
\end{split} 
\end{equation*}Hence,
\begin{equation}\label{im2}
\limsup_{n\to \infty}\left(\|\nabla u_n\|_{G_1} +\|\nabla v_n\|_{G_2} \right)\leq \max\left\lbrace k^{1/p_1^{+}},k^{1/p_1^{-}},k^{1/p_2^{+}},k^{1/p_2^{-}} \right\rbrace.
\end{equation}Similarly, appealing to Lemma \ref{numerical lemma},
\begin{equation*}
\begin{split}
\|u_n\|_{G^*_1}+\| v_n\|_{G^*_2} & \geq \min \left\lbrace \left(\int_\Omega G^*_1(|u_n|)\,dx\right)^{1/(p_1^+)^*},\left(\int_\Omega G^*_1(| u_n|)\,dx\right)^{1/(p_1^-)^*}  \right\rbrace \\ & +\min \left\lbrace \left(\int_\Omega G^*_2(|v_n|)\,dx\right)^{1/(p_2^+)^*},\left(\int_\Omega G^*_2(| v_n|)\,dx\right)^{1/(p_1^-)^*}  \right\rbrace\\& \geq \left(\int_\Omega G^*_1(|u_n|)\,dx+\int_\Omega G^*_2(|v_n|)\,dx \right)^{\alpha},
\end{split} 
\end{equation*}for some $\alpha >0$ depending, according to the proof of Lemma \ref{numerical lemma}, on the exponents. Hence, by Lemma \ref{growth F} and assumption $(F2)$,
\begin{equation}
\begin{split}
\|u_n\|_{G^*_1}+\| v_n\|_{G^*_2} & \geq C\left(\int_\Omega F(u_n, v_n)\,dx\right)^{\alpha}\geq C\left(\int_\Omega F_s(u_n, v_n)u_n+F_t(u_n, v_n)v_n \,dx \right)^{\alpha}.
\end{split}
\end{equation}Hence,
\begin{equation}\label{im3}
\liminf_{n\to \infty}\left(\|u_n\|_{G^*_1}+\| v_n\|_{G^*_2}\right) \geq Ck^\alpha.
\end{equation}Thus, combining \eqref{im2} and \eqref{im3} with the Orlicz-Sobolev embedding Theorem \ref{compact embedding},  we deduce
$$k^{\alpha}\leq C \max\left\lbrace k^{1/p_1^{+}},k^{1/p_1^{-}},k^{1/p_2^{+}},k^{1/p_2^{-}} \right\rbrace.$$

We point out that in view of assumption \eqref{exponents}, 
$$\alpha < \dfrac{1}{p_i^+}, $$for $i=1, 2$. Then, since $k>0$,   there is a constant $K>0$ independent of $\lambda$ such that $k\geq K$. Therefore, recalling \eqref{eq19},
\begin{equation}
\begin{split}
c_\lambda  = \lim_{n\to \infty}\Phi_\lambda (u_n, v_n) & \geq \liminf_{n\to \infty}\bigg(\dfrac{1}{p_1^{+}}\int_\Omega g_1(|\nabla u_n|)|\nabla u_n|^2\,dx + \dfrac{1}{p_2^{+}}\int_\Omega g_2(|\nabla v_n|)|\nabla v_n|^2\,dx \\ & -\dfrac{1}{c_0}\int_\Omega F_s(u_n, v_n)u_n + F_t(u_n, v_n)v_n\,dx  \\ & -\dfrac{1}{c_1}\int_\Omega H_s(x,u_n, v_n)u_n + H_t(x, u_n, v_n)v_n\,dx\bigg) \\ & \geq  \left(\dfrac{1}{\max\left\lbrace p_1^+, p_2^{+}\right\rbrace}-\dfrac{1}{c_0}\right) k \\ & \geq  \left(\dfrac{1}{\max\left\lbrace p_1^+, p_2^{+}\right\rbrace}-\dfrac{1}{c_0}\right) K.
\end{split}
\end{equation}Thus, recalling \eqref{c lamda}, for $\lambda$ large enough, we obtain a contradiction. Hence, at least one $u$ or $v$ is different from $0$. This ends the proof of the theorem.

\section{Proof of Theorem \ref{main theorem 2}}\label{proof second}

To prove Theorem \ref{main theorem 2}, we observe that the only step where we have used that $H$ is nonnegative in the previous section is in the proof of the boundedness of the Palais-Smale sequence. Specifically, in \eqref{lower bound}. To overcome this difficulty, assume $(F3)'$, and hence
\begin{equation}\label{long calc}
\begin{split}
&(\mu c_0-1)\int_\Omega F(u_n, v_n)\,dx +(\mu c_1-1)\int_\Omega H(x. u_n, v_n)\,dx \\ &\qquad \geq C(\mu c_0-1)\bigg(\min\left\lbrace \|u_n\|_{G^*_1}^{(p_1^*)^+}, \|u_n\|_{G^*_1}^{(p_1^*)^-} \right\rbrace  + \min\left\lbrace \|v_n\|_{G^*_2}^{(p_2^*)^+}, \|v_n\|_{G^*_2}^{(p_2^*)^-} \right\rbrace\bigg)\\& \qquad - C(\mu c_1-1)\bigg(\min\left\lbrace \|u_n\|_{B_1}^{p_{B_1}^+}, \|u_n\|_{B_1}^{p_{B_1}^-} \right\rbrace  + \min\left\lbrace \|v_n\|_{B_2}^{p_{B_2}^+}, \|v_n\|_{B_2}^{p_{B_2}^-} \right\rbrace\bigg)\\ & \qquad \geq C(\mu c_0-1)\bigg(\min\left\lbrace \|u_n\|_{G^*_1}^{(p_1^*)^+}, \|u_n\|_{G^*_1}^{(p_1^*)^-} \right\rbrace  + \min\left\lbrace \|v_n\|_{G^*_2}^{(p_2^*)^+}, \|v_n\|_{G^*_2}^{(p_2^*)^-} \right\rbrace\bigg)\\& \qquad - C(\mu c_1-1)\bigg(\min\left\lbrace \|u_n\|_{G_1^*}^{p_{B_1}^+}, \|u_n\|_{G_1^*}^{p_{B_1}^-} \right\rbrace  + \min\left\lbrace \|v_n\|_{G_2^*}^{p_{B_2}^+}, \|v_n\|_{G_2^*}^{p_{B_2}^-} \right\rbrace\bigg)
\end{split}
\end{equation}where we have use that $L^{G_i^*}(\Omega)\hookrightarrow L^{B_i}(\Omega)$. Now, if $\|u_n\|_{G^*_1}$ and $\|v_n\|_{G^*_2}$ remains bounded, the last terms in \eqref{long calc} is bounded and so we conclude the proof in view of \eqref{upper bound} and the remaining terms in \eqref{lower bound}. If for instance $\|u_n\|_{G^*_1}$ is unbounded, then by assumption \eqref{exponents B}, the last term in \eqref{long calc} is non negative for $n$ large enough. Hence, in this case, that term may be neglected and the proof is ended as in Lemma \ref{boundedness}.

\section*{Acknowledgements}
 P. Ochoa has been partially supported by Grant B017-UNCUYO.  P. Ochoa is a member of CONICET.

\end{document}